\newtheorem{cor}[subsection]{Corollary}
\newtheorem{lem}[subsection]{Lemma}
\newtheorem{prop}[subsection]{Proposition}
\newtheorem{conj}[subsection]{Conjecture}
\newtheorem{thm}[subsection]{Theorem}
\newtheorem{rem}[subsection]{Remark}
\theoremstyle{definition}
\theoremstyle{remark}
\newcommand{\nc}{\newcommand}
\nc{\renc}{\renewcommand} \nc{\ssec}{\subsection}
\nc{\sssec}{\subsubsection} \nc{\on}{\operatorname}
\nc\ol{\overline} \nc\ul{\underline} \nc\wt{\widetilde}
\nc\tboxtimes{\wt{\boxtimes}} \nc{\alp}{\alpha}
\nc{\ZZ}{{\mathbb Z}} \nc{\NN}{{\mathbb N}} \nc{\CC}{{\mathbb C}}
\nc{\OO}{{\mathbb O}} \renc{\SS}{{\mathbb S}} \nc{\DD}{{\mathbb
D}}
\nc{\Fq}{{\mathbb F}_q} \nc{\Fqb}{\ol{{\mathbb F}_q}}
\nc{\Ql}{\ol{{\mathbb Q}_\ell}} \nc{\id}{\text{id}} \nc\X{\mathcal
X}
\nc{\Hom}{\on{Hom}} \nc{\Lie}{\on{Lie}} \nc{\Loc}{\on{Loc}}
\nc{\Pic}{\on{Pic}} \nc{\Bun}{\on{Bun}} \nc{\IC}{\on{IC}}
\nc{\Aut}{\on{Aut}} \nc{\rk}{\on{rk}} \nc{\Sh}{\on{Sh}}
\nc{\Perv}{\on{Perv}} \nc{\pos}{{\on{pos}}} \nc{\Conv}{\on{Conv}}
\nc{\Sph}{\on{Sph}} \nc{\Sym}{\on{Sym}}
\nc{\BunBb}{\overline{\Bun}_B} \nc{\Buno}{\overset{o}{\Bun}}
\nc{\BunPb}{{\overline{\Bun}_P}}
\nc{\BunBM}{\overline{\Bun}_{B(M)}}
\nc{\BunPbw}{{\widetilde{\Bun}_P}}
\nc{\BunBP}{\widetilde{\Bun}_{B,P}} \nc{\GUb}{\overline{G/U}}
\nc{\GUPb}{\overline{G/U(P)}}
\nc{\iso}{{\stackrel{\sim}{\longrightarrow}}}
\nc{\Hhom}{\underline{\on{Hom}}} \nc\syminfty{\on{Sym}^{\infty}}
\nc\lal{\ol{\lambda}} \nc\xl{\ol{x}} \nc\thl{\ol{\theta}}
\nc\nul{\ol{\nu}} \nc\mul{\ol{\mu}} \nc\Sum\Sigma
\nc{\oX}{\overset{o}{X}{}}
\nc{\M}{{\mathcal M}} \nc{\N}{{\mathcal N}} \nc{\F}{{\mathcal F}}
\nc{\D}{{\mathcal D}} \nc{\Q}{{\mathcal Q}} \nc{\Y}{{\mathcal Y}}
\nc{\G}{{\mathcal G}} \nc{\E}{{\mathcal E}} \nc{\CalC}{{\mathcal
C}}
\nc\Dh{\widehat{\D}}
\nc{\C}{{\mathcal C}} \nc{\K}{{\mathcal K}}
\renewcommand{\H}{{\mathcal H}}
\nc{\T}{{\mathcal T}} \nc{\V}{{\mathcal V}} \renc{\P}{{\mathcal
P}} \nc{\A}{{\mathcal A}} \nc{\B}{{\mathcal B}} \nc{\U}{{\mathcal
U}}
\nc{\Gr}{\on{Gr}}
\nc{\frn}{{\check{\mathfrak u}(P)}}
\nc\f{{\mathfrak f}}
\nc{\q}{{\mathfrak q}} \nc{\p}{{\mathfrak p}} \nc{\s}{{\mathfrak
s}} \nc\w{\text{w}}
\nc\Spec{\on{Spec}} \nc\Mod{\on{Mod}}
\nc{\tw}{\widetilde{\mathfrak t}} \nc{\pw}{\widetilde{\mathfrak
p}} \nc{\qw}{\widetilde{\mathfrak q}} \nc{\jw}{\widetilde j}
\nc{\grb}{\overline{\Gr}} \nc{\I}{\mathcal I}
\nc{\lambdach}{{\check\lambda}} \nc{\Lambdach}{{\check\Lambda}{}}
\nc{\much}{{\check\mu}} \nc{\omegach}{{\check\omega}}
\nc{\nuch}{{\check\nu}} \nc{\etach}{{\check\eta}}
\nc{\alphach}{{\check\alpha}} \nc{\betach}{{\check\beta}}
\nc{\rhoch}{{\check\rho}} \nc{\ch}{{\check h}}
\nc{\Hb}{\overline{\H}}
\nc{\BA}{{\mathbb{A}}} \nc{\BC}{{\mathbb{C}}} \nc{\BQ}{{\mathbb{Q}}}
\nc{\BM}{{\mathbb{M}}} \nc{\BN}{{\mathbb{N}}} \nc{\BO}{{\mathbb{O}}}
\nc{\BP}{{\mathbb{P}}} \nc{\BR}{{\mathbb{R}}}
\nc{\BZ}{{\mathbb{Z}}} \nc{\BS}{{\mathbb{S}}}
\nc{\CA}{{\mathcal{A}}} \nc{\CB}{{\mathcal{B}}}
\nc{\CE}{{\mathcal{E}}} \nc{\CF}{{\mathcal{F}}}
\nc{\CG}{{\mathcal{G}}} \nc{\CH}{{\mathcal{H}}}
\nc{\CI}{{\mathcal{I}}} \nc{\CL}{{\mathcal{L}}}
\nc{\CM}{{\mathcal{M}}} \nc{\CN}{{\mathcal{N}}}
\nc{\CO}{{\mathcal{O}}} \nc{\CP}{{\mathcal{P}}}
\nc{\CQ}{{\mathcal{Q}}} \nc{\CR}{{\mathcal{R}}}
\nc{\CS}{{\mathcal{S}}} \nc{\CT}{{\mathcal{T}}}
\nc{\CU}{{\mathcal{U}}} \nc{\CV}{{\mathcal{V}}}  \nc{\CY}{{\mathcal Y}}
\nc{\CW}{{\mathcal{W}}} \nc{\CZ}{{\mathcal{Z}}}
\nc{\cM}{{\check{\mathcal M}}{}} \nc{\csM}{{\check{\mathcal A}}{}}
\nc{\oM}{{\overset{\circ}{\mathcal M}}{}}
\nc{\obM}{{\overset{\circ}{\mathbf M}}{}}
\nc{\oCA}{{\overset{\circ}{\mathcal A}}{}}
\nc{\obA}{{\overset{\circ}{\mathbf A}}{}}
\nc{\ooM}{{\overset{\circ}{M}}{}}
\nc{\osM}{{\overset{\circ}{\mathsf M}}{}}
\nc{\vM}{{\overset{\bullet}{\mathcal M}}{}}
\nc{\nM}{{\underset{\bullet}{\mathcal M}}{}}
\nc{\oD}{{\overset{\circ}{\mathcal D}}{}}
\nc{\obD}{{\overset{\circ}{\mathbf D}}{}}
\nc{\oA}{{\overset{\circ}{\mathbb A}}{}}
\nc{\op}{{\overset{\bullet}{\mathbf p}}{}}
\nc{\cp}{{\overset{\circ}{\mathbf p}}{}}
\nc{\oU}{{\overset{\bullet}{\mathcal U}}{}}
\nc{\oZ}{{\overset{\circ}{\mathcal Z}}{}}
\nc{\ofZ}{{\overset{\circ}{\mathfrak Z}}{}}
\nc{\ff}{{\mathfrak{f}}} \nc{\fv}{{\mathfrak{v}}}
\nc{\fa}{{\mathfrak{a}}} \nc{\fb}{{\mathfrak{b}}}
\nc{\fd}{{\mathfrak{d}}} \nc{\fe}{{\mathfrak{e}}}
\nc{\fg}{{\mathfrak{g}}} \nc{\fgl}{{\mathfrak{gl}}}
\nc{\fh}{{\mathfrak{h}}} \nc{\fri}{{\mathfrak{i}}}
\nc{\fj}{{\mathfrak{j}}} \nc{\fk}{{\mathfrak{k}}}
\nc{\fm}{{\mathfrak{m}}} \nc{\fn}{{\mathfrak{n}}}
\nc{\ft}{{\mathfrak{t}}} \nc{\fu}{{\mathfrak{u}}}
\nc{\fw}{{\mathfrak{w}}} \nc{\fz}{{\mathfrak{z}}}
\nc{\fp}{{\mathfrak{p}}} \nc{\frr}{{\mathfrak{r}}}
\nc{\fs}{{\mathfrak{s}}} \nc{\fo}{{\mathfrak{o}}}
\nc{\fsl}{{\mathfrak{sl}}} \nc{\fsp}{{\mathfrak{sp}}}
\nc{\hsl}{{\widehat{\mathfrak{sl}}}}
\nc{\hgl}{{\widehat{\mathfrak{gl}}}}
\nc{\hg}{{\widehat{\mathfrak{g}}}}
\nc{\chg}{{\widehat{\mathfrak{g}}}{}^\vee}
\nc{\hn}{{\widehat{\mathfrak{n}}}}
\nc{\chn}{{\widehat{\mathfrak{n}}}{}^\vee}
\nc{\fA}{{\mathfrak{A}}} \nc{\fB}{{\mathfrak{B}}}
\nc{\fD}{{\mathfrak{D}}} \nc{\fE}{{\mathfrak{E}}}
\nc{\fF}{{\mathfrak{F}}} \nc{\fG}{{\mathfrak{G}}} \nc{\fH}{{\mathfrak{H}}}
\nc{\fI}{{\mathfrak{I}}} \nc{\fJ}{{\mathfrak{J}}}
\nc{\fK}{{\mathfrak{K}}} \nc{\fL}{{\mathfrak{L}}}
\nc{\fM}{{\mathfrak{M}}} \nc{\fN}{{\mathfrak{N}}}
\nc{\frP}{{\mathfrak{P}}} \nc{\fQ}{{\mathfrak{Q}}}
\nc{\fT}{{\mathfrak{T}}} \nc{\fU}{{\mathfrak{U}}}
\nc{\fV}{{\mathfrak{V}}} \nc{\fW}{{\mathfrak{W}}}
\nc{\fX}{{\mathfrak{X}}} \nc{\fY}{{\mathfrak{Y}}}
\nc{\fZ}{{\mathfrak{Z}}}
\nc{\ba}{{\mathbf{a}}}
\nc{\bb}{{\mathbf{b}}} \nc{\bc}{{\mathbf{c}}}
\nc{\be}{{\mathbf{e}}} \nc{\bj}{{\mathbf{j}}}
\nc{\bn}{{\mathbf{n}}} \nc{\bp}{{\mathbf{p}}}
\nc{\bq}{{\mathbf{q}}} \nc{\br}{{\mathbf{r}}} \nc{\bt}{{\mathbf{t}}}
\nc{\bfu}{{\mathbf{u}}} \nc{\bv}{{\mathbf{v}}}
\nc{\bx}{{\mathbf{x}}} \nc{\by}{{\mathbf{y}}}
\nc{\bw}{{\mathbf{w}}} \nc{\bA}{{\mathbf{A}}}
\nc{\bB}{{\mathbf{B}}} \nc{\bC}{{\mathbf{C}}}
\nc{\bD}{{\mathbf{D}}} \nc{\bF}{{\mathbf{F}}}
\nc{\bH}{{\mathbf{H}}} \nc{\bK}{{\mathbf{K}}}
\nc{\bM}{{\mathbf{M}}} \nc{\bN}{{\mathbf{N}}}
\nc{\bO}{{\mathbf{O}}} \nc{\bS}{{\mathbf{S}}} \nc{\bT}{{\mathbf{T}}}
\nc{\bV}{{\mathbf{V}}} \nc{\bW}{{\mathbf{W}}}
\nc{\bX}{{\mathbf{X}}}
\nc{\bY}{{\mathbf{Y}}} \nc{\bP}{{\mathbf{P}}}
\nc{\bZ}{{\mathbf{Z}}} \nc{\bh}{{\mathbf{h}}}
\nc{\sA}{{\mathsf{A}}} \nc{\sB}{{\mathsf{B}}}
\nc{\sC}{{\mathsf{C}}} \nc{\sD}{{\mathsf{D}}}
\nc{\sE}{{\mathsf{E}}} \nc{\sF}{{\mathsf{F}}}
\nc{\sK}{{\mathsf{K}}} \nc{\sL}{{\mathsf{L}}}
\nc{\sM}{{\mathsf{M}}} \nc{\sO}{{\mathsf{O}}}
\nc{\sQ}{{\mathsf{Q}}} \nc{\sP}{{\mathsf{P}}}
\nc{\sT}{{\mathsf{T}}} \nc{\sZ}{{\mathsf{Z}}}
\nc{\sV}{{\mathsf{V}}}
\nc{\sfp}{{\mathsf{p}}} \nc{\sr}{{\mathsf{r}}}
\nc{\st}{{\mathsf{t}}} \nc{\sfb}{{\mathsf{b}}}
\nc{\sfc}{{\mathsf{c}}} \nc{\sd}{{\mathsf{d}}}
\nc{\sz}{{\mathsf{z}}}
\nc{\BK}{{\bar{K}}}
\nc{\tA}{{\widetilde{\mathbf{A}}}}
\nc{\tB}{{\widetilde{\mathcal{B}}}}
\nc{\tg}{{\widetilde{\mathfrak{g}}}} \nc{\tG}{{\widetilde{G}}}
\nc{\TM}{{\widetilde{\mathbb{M}}}{}}
\nc{\tO}{{\widetilde{\mathsf{O}}}{}}
\nc{\tU}{{\widetilde{\mathfrak{U}}}{}} \nc{\TZ}{{\tilde{Z}}}
\nc{\tx}{{\tilde{x}}} \nc{\tbv}{{\tilde{\bv}}}
\nc{\tfP}{{\widetilde{\mathfrak{P}}}{}} \nc{\tz}{{\tilde{\zeta}}}
\nc{\tmu}{{\tilde{\mu}}}
\nc{\urho}{\underline{\rho}} \nc{\uB}{\underline{B}}
\nc{\uC}{{\underline{\mathbb{C}}}} \nc{\ui}{\underline{i}}
\nc{\uj}{\underline{j}} \nc{\ofP}{{\overline{\mathfrak{P}}}}
\nc{\oB}{{\overline{\mathcal{B}}}}
\nc{\og}{{\overline{\mathfrak{g}}}} \nc{\oI}{{\overline{I}}}
\nc{\eps}{\varepsilon} \nc{\hrho}{{\hat{\rho}}}
\nc{\blambda}{{\boldsymbol{\lambda}}}
\nc{\one}{{\mathbf{1}}} \nc{\two}{{\mathbf{t}}}
\nc{\Rep}{{\mathop{\operatorname{\rm Rep}}}}
\nc{\Tot}{{\mathop{\operatorname{\rm Tot}}}}
\nc{\Ker}{{\mathop{\operatorname{\rm Ker}}}}
\nc{\Hilb}{{\mathop{\operatorname{\rm Hilb}}}}
\nc{\End}{{\mathop{\operatorname{\rm End}}}}
\nc{\Ext}{{\mathop{\operatorname{\rm Ext}}}}
\nc{\CHom}{{\mathop{\operatorname{{\mathcal{H}}\it om}}}}
\nc{\GL}{{\mathop{\operatorname{\rm GL}}}}
\nc{\gr}{{\mathop{\operatorname{\rm gr}}}}
\nc{\Id}{{\mathop{\operatorname{\rm Id}}}}
\nc{\defi}{{\mathop{\operatorname{\rm def}}}}
\nc{\length}{{\mathop{\operatorname{\rm length}}}}
\nc{\supp}{{\mathop{\operatorname{\rm supp}}}}
\nc{\Cliff}{{\mathsf{Cliff}}}
\nc{\Fl}{{\mathsf{Fl}}} \nc{\Fib}{{\mathsf{Fib}}}
\nc{\Coh}{{\mathsf{Coh}}} \nc{\FCoh}{{\mathsf{FCoh}}}
\nc{\reg}{{\text{\rm reg}}}
\nc{\cplus}{{\mathbf{C}_+}} \nc{\cminus}{{\mathbf{C}_-}}
\nc{\cthree}{{\mathbf{C}_*}} \nc{\Qbar}{{\bar{Q}}}
\nc{\bOmega}{{\overline{\Omega}}}
\nc{\seq}[1]{\stackrel{#1}{\sim}}
\nc{\aff}{\operatorname{aff}}
\newcommand{\YO}{{\mathcal{Y}}}
\newcommand{\DO}{{\mathcal{D}}}
 \DeclareMathOperator{\diag}{diag}
 \DeclareMathOperator{\Tr}{Tr}
\begin{document}

\title
{Quantization of Drinfeld Zastava in type C}

\dedication{To Vladimir Drinfeld on his 60th birthday}



\author{Michael Finkelberg}
\email{fnklberg@gmail.com}
\address{IMU, IITP, and
National Research University Higher School of Economics\\
Department of Mathematics\\
20 Myasnitskaya st,
Moscow 101000, Russia}

\author{Leonid Rybnikov}
\email{leo.rybnikov@gmail.com}
\address{IITP, and
National Research University Higher School of Economics\\
Department of Mathematics\\
20 Myasnitskaya st,
Moscow 101000, Russia}

\classification{19E08, (22E65, 37K10)}
\keywords{chainsaw quiver, quadratic spaces, affine Yangian,
hamiltonian reduction, quantization.}
\thanks{Both authors were partially supported by the RFBR grants 12-01-00944,
12-01-33101, 13-01-12401/13,
the National Research University Higher School of Economics' Academic Fund
award No.12-09-0062 and
the AG Laboratory HSE, RF government grant, ag. 11.G34.31.0023.
This study comprises research findings from the ``Representation Theory
in Geometry and in Mathematical Physics" carried out within The
National Research University Higher School of Economics' Academic Fund Program
in 2012, grant No 12-05-0014. L.R. was supported by the RFBR grant
11-01-93105-CNRSL-a. This study was carried out within The National
Research University Higher School of Economics Academic Fund Program
in 2013-2014, research grant No. 12-01-0065.}

\begin{abstract}
Drinfeld zastava is a certain closure of the moduli space of maps from the
projective line to the Kashiwara flag scheme of an affine Lie algebra
$\hat \fg$. In case $\fg$ is the symplectic Lie algebra $\fsp_N$,
we introduce an affine, reduced, irreducible, normal quiver
variety $Z$ which maps to the zastava space isomorphically in characteristic 0.
The natural Poisson structure on the zastava space $Z$ can be
described in terms of Hamiltonian reduction of a certain Poisson
subvariety of the dual space of a (nonsemisimple) Lie algebra. The quantum
Hamiltonian reduction of the corresponding quotient of its universal enveloping
algebra produces a quantization $Y$ of the coordinate ring of $Z$. The same
quantization was obtained in the finite (as opposed to the affine) case
generically in the work of Gerasimov-Kharchev-Lebedev-Oblezin (2005).
We prove that $Y$ is a quotient of the affine
Borel Yangian. The analogous results for $\fg=\fsl_N$ were obtained in our
previous work. 
\end{abstract}
\maketitle

\section{Introduction}
\subsection{}
\label{111}
This note is a continuation of~\cite{fr} where we have studied the
Drinfeld zastava spaces $Z^{\ul{d}}(\widehat{\mathfrak{sl}}_N)$ from the
Invariant Theory viewpoint. Recall that given a collection of complex
vector spaces $(V_l)_{l\in\BZ/N\BZ},\ \ul\dim(V_l)_{l\in\BZ/N\BZ}=
(d_l)_{l\in\BZ/N\BZ}=\ul{d}$ along with $(W_l)_{l\in\BZ/N\BZ},\
\ul\dim(W_l)_{l\in\BZ/N\BZ}=(1,\ldots,1)$, we consider the space
$M_{\ul{d}}=\{(A_l,B_l,p_l,q_l)_{l\in\BZ/N\BZ}\}=$
$$\bigoplus_{l\in\BZ/N\BZ}\End(V_l)\oplus
\bigoplus_{l\in\BZ/N\BZ}\Hom(V_l,V_{l+1})\oplus
\bigoplus_{l\in\BZ/N\BZ}\Hom(W_{l-1},V_l)\oplus
\bigoplus_{l\in\BZ/N\BZ}\Hom(V_l,W_l)$$
of representations of the following chainsaw quiver:
$$\xymatrix{
\ldots \ar[r]^{B_{-3}}
& V_{-2} \ar@(ur,ul)[]_{A_{-2}} \ar[r]^{B_{-2}} \ar[d]_{q_{-2}}
& V_{-1} \ar@(ur,ul)[]_{A_{-1}} \ar[r]^{B_{-1}} \ar[d]_{q_{-1}}
& V_0 \ar@(ur,ul)[]_{A_0} \ar[r]^{B_0} \ar[d]_{q_0}
& V_1 \ar@(ur,ul)[]_{A_1} \ar[r]^{B_1} \ar[d]_{q_1}
& V_2 \ar@(ur,ul)[]_{A_2} \ar[r]^{B_2} \ar[d]_{q_2} &\ldots\\
\ldots \ar[ur]^{p_{-2}} & W_{-2} \ar[ur]^{p_{-1}} & W_{-1} \ar[ur]^{p_0}
& W_0 \ar[ur]^{p_1} & W_1 \ar[ur]^{p_2} & W_2 \ar[ur]^{p_3} &\ldots
}$$
Furthermore, we consider the closed subscheme $\sM_{\ul{d}}\subset M_{\ul{d}}$
cut out by the equations $A_{l+1}B_l-B_lA_l+p_{l+1}q_l=0\ \forall l$, and two
open subschemes $M^s_{\ul{d}}\subset M_{\ul{d}}$
(resp. $M^c_{\ul{d}}\subset M_{\ul{d}}$) formed by all
$\{(A_l,B_l,p_l,q_l)_{l\in\BZ/N\BZ}\}$ such that for any $\BZ/N\BZ$-graded
subspace $V'_\bullet\subset V_\bullet$ with $A_lV'_l\subset V'_l$, and
$B_lV'_l\subset V'_{l+1}\ \forall l$, if $p_l(W_{l-1})\subset V'_l\ \forall l$,
then $V'_\bullet=V_\bullet$ (resp. if $V'_l\subset\on{Ker}q_l\ \forall l$, then
$V'_\bullet=0$). The group $G(V_\bullet)=\prod_{l\in\BZ/N\BZ}GL(V_l)$ acts naturally
on $M_{\ul{d}}$ preserving the subschemes $\sM_{\ul{d}},M^s_{\ul{d}},M^c_{\ul{d}}$.

According to~\cite{fr},~\cite{bf}, the action of $G(V_\bullet)$ on
$\sM_{\ul{d}}\cap M^s_{\ul{d}}\cap M^c_{\ul{d}}$ is free, and the quotient
$(\sM_{\ul{d}}\cap M^s_{\ul{d}}\cap M^c_{\ul{d}})/G(V_\bullet)$ is naturally
isomorphic to the moduli space $\overset{\circ}{Z}{}^{\ul{d}}$ of based maps
of degree $\ul{d}$ from the projective line to the Kashiwara flag scheme
of the affine Lie algebra $\widehat{\mathfrak{sl}}_N$. Moreover, the categorical
quotient $\sM_{\ul{d}}/\!/G(V_\bullet)$ is naturally isomorphic to the Drinfeld
zastava closure
$Z^{\ul{d}}(\widehat{\mathfrak{sl}}_N)\supset\overset{\circ}{Z}{}^{\ul{d}}$.
Furthermore, the scheme $\sM_{\ul{d}}/\!/G(V_\bullet)\simeq
Z^{\ul{d}}(\widehat{\mathfrak{sl}}_N)$ is reduced, irreducible, normal.
One of the crucial points in proving this consists in checking that
$(\sM_{\ul{d}}\cap M^s_{\ul{d}}\cap M^c_{\ul{d}})\subset\sM_{\ul{d}}$ is dense,
while $\sM_{\ul{d}}\subset M_{\ul{d}}$ is a complete intersection.

\subsection{}
\label{112}
One of the goals of this note is to extend the above results to the case
of zastava spaces for the affine {\em symplectic} Lie algebra
$\widehat{\mathfrak{sp}}_N$ (in case $N$ is even). We prove that
any zastava scheme for $\widehat{\mathfrak{sp}}_N$ is reduced, irreducible,
normal (note that these properties of zastava schemes were established
in~\cite{bf} for all {\em finite dimensional} simple Lie algebras).

To this end we again invoke Invariant Theory. Following~\cite{bs}, we equip
$W:=\bigoplus_{l\in\BZ/N\BZ}W_l$ with a symplectic form such that $W_l$ and
$W_k$ are orthogonal unless $l+k=N-1$. We equip $V:=\bigoplus_{l\in\BZ/N\BZ}V_l$
with a nondegenerate {\em symmetric} bilinear form such that
$V_l$ and $V_k$ are orthogonal unless $l+k=0$. In particular, we must have
$d_{-l}=d_l\ \forall l$, so that the collection $\ul{d}$ is encoded by
$\ol{d}:= (d_0,d_1,\ldots,d_{N/2})$. We denote by $O(V_\bullet)$
the Levi subgroup of the orthogonal group $O(V)$ preserving the
decomposition $V:=\bigoplus_{l\in\BZ/N\BZ}V_l$.
We consider the space $M^{-1}_{\ul{d}}\subset M_{\ul{d}}$ of representations of
the {\em quadratic chainsaw quiver} formed by all the selfadjoint collections
$A^*_l=A_l,\ B^*_l=B_{-l-1},\ p^*_l=q_{-l}$. We denote by $\sM^{-1}_{\ul{d}}\subset
M^{-1}_{\ul{d}}$ the scheme-theoretic intersection $\sM_{\ul{d}}\cap M^{-1}_{\ul{d}}$.

We prove that $\sM^{-1}_{\ul{d}}\cap M^s_{\ul{d}}=
\sM^{-1}_{\ul{d}}\cap M^c_{\ul{d}}\subset\sM^{-1}_{\ul{d}}$ is dense, while
$\sM^{-1}_{\ul{d}}\subset M^{-1}_{\ul{d}}$ is a complete intersection.
We deduce that the categorical quotient $\sM^{-1}_{\ul{d}}/\!/O(V_\bullet)$
is reduced, irreducible and normal. Furthermore, we prove that the
action of $O(V_\bullet)$ on $\sM^{-1}_{\ul{d}}\cap M^s_{\ul{d}}=
\sM^{-1}_{\ul{d}}\cap M^c_{\ul{d}}$ is free, and the quotient
$(\sM^{-1}_{\ul{d}}\cap M^s_{\ul{d}})/O(V_\bullet)$ is naturally isomorphic to
the moduli space $\overset{\circ}{Z}{}^{\ol{d}}(\widehat{\mathfrak{sp}}_N)$
of based maps
of degree $\ol{d}$ from the projective line to the Kashiwara flag scheme
of the affine Lie algebra $\widehat{\mathfrak{sp}}_N$. Moreover, the categorical
quotient $\sM^{-1}_{\ul{d}}/\!/G(V_\bullet)$ is naturally isomorphic to the Drinfeld
zastava closure $Z^{\ol{d}}(\widehat{\mathfrak{sp}}_N)\supset
\overset{\circ}{Z}{}^{\ol{d}}(\widehat{\mathfrak{sp}}_N)$.

\subsection{}
\label{113}
Quite naturally, we would like to extend the above results to the case of the
affine orthogonal Lie algebra $\widehat{\mathfrak{so}}_N$. To this end we
change the parities of the bilinear forms in~\ref{112}. That is, we equip
$W$ with a nondegenerate {\em symmetric} bilinear form, and we equip
$V$ with a {\em symplectic} form. The corresponding space of representations of
the quadratic chainsaw quiver is denoted by $M^1_{\ul{d}}$, and the
corresponding Levi subgroup of $Sp(V)$ is denoted by $Sp(V_\bullet)$.
It is still true that the action of $Sp(V_\bullet)$ on
$\sM^1_{\ul{d}}\cap M^s_{\ul{d}}=\sM^1_{\ul{d}}\cap M^c_{\ul{d}}$ is free,
and the quotient $(\sM^1_{\ul{d}}\cap M^s_{\ul{d}})/Sp(V_\bullet)$ is naturally
isomorphic to $\overset{\circ}{Z}{}^{\ol{d}}(\widehat{\mathfrak{so}}_N)$.

However, we encounter the following mysterious obstacle:
$\sM^1_{\ul{d}}$ is {\em not} irreducible in general, and
$\sM^1_{\ul{d}}\cap M^s_{\ul{d}}=\sM^1_{\ul{d}}\cap M^c_{\ul{d}}$ is only dense in
one of its irreducible components. For this reason the categorical quotient
$\sM^1_{\ul{d}}/\!/Sp(V_\bullet)$ is {\em not} isomorphic to the zastava space
$Z^{\ol{d}}(\widehat{\mathfrak{so}}_N)$ in general. The simplest example
occurs when $\ol{d}$ is the affine simple coroot of
$\widehat{\mathfrak{so}}_N$, that is $\ul{d}=(\ldots,0,1,2,1,0,\ldots)$.

\subsection{}
\label{114}
Following \cite{fr}, we describe the natural Poisson structure on $Z^{\ol{d}}(\widehat{\mathfrak{sp}}_N)$ in quiver terms. It is obtained by the Hamiltonian reduction of a Poisson subvariety of the dual vector space of a (nonsemisimple) Lie
algebra $\fa_{\ul{d}}^{-1}$ with its Lie-Kirillov-Kostant bracket. Now the ring of functions $\BC[Z^{\ol{d}}(\widehat{\mathfrak{sp}}_N)]$ admits a natural quantization $\CY_{\ol{d}}^{-1}$ as the quantum Hamiltonian reduction of a quotient
algebra of the universal enveloping algebra $U(\fa_{\ul{d}}^{-1})$. The algebra
$\CY_{\ol{d}}^{-1}$ admits a homomorphism from the Borel subalgebra $\YO^{-1}$
of the Yangian of type $C$ in the case of finite Zastava space. We prove
that this homomorphism is surjective. In the affine situation, there is an affine analog $\widehat{\YO}{}^{-1}$ of $\YO^{-1}$ (it is no longer a subalgebra in the Yangian of $\widehat{\mathfrak{sp}}_N$), and we define it explicitly by generators and relations. We prove that there is a surjective homomorphism $\widehat{\YO}{}^{-1}\to\CY_{\ol{d}}^{-1}$. Moreover, we write down certain elements in the kernel of this homomorphism and conjecture that they generate the kernel (as a two-sided
ideal). These elements are similar to the generators of the kernel of the Kamnitzer-Webster-Weekes-Yacobi homomorphism from {\em shifted Yangian} to the quantization of the transversal slices in the affine Grassmannian. In fact, as explained in \cite{KWWY} $\CY_{\ol{d}}^{-1}$ as a filtered algebra is the limit of a sequence of quantum coordinate rings of transversal slices.


\section{A quiver approach to Drinfeld Zastava for symplectic groups}

\subsection{Quadratic spaces}
\label{quadratic}
We will recall the convenient terminology introduced in~\cite{kp}.
Let $U$ be an $N$-dimensional complex vector space equipped with a
nondegenerate bilinear form $(,)$ such that $(u,v)=\varepsilon(v,u)$.
It will be called a {\em quadratic space of type $\varepsilon$} (shortly
an {\em orthogonal space} in case $\varepsilon=1$, a {\em symplectic space}
in case $\varepsilon=-1$). We denote by $G_\varepsilon(U)$ the subgroup of
$GL(U)$ leaving the form invariant. So we have $G_\varepsilon(U)=O(N)$
or $Sp(N)$ according to $\varepsilon=1$ or $\varepsilon=-1$.

Let $A\mapsto A^*,\ \End(U)\to\End(U)$ be the canonical involution associated
to the form, i.e. $(Au,v)=(u,A^*v)$ for any $u,v\in U$. More generally, for
a linear operator $B\in\Hom(U,\ 'U)$ we denote by $B^*$ the adjoint
(or transposed) operator $B^*\in\Hom(\ 'U^*,U^*)$.

We choose a basis $w_0,\ldots,w_{N-1}$ in a quadratic space $W$ of type
$\varepsilon=-1$ such that for $0\leq l<N/2$
we have $(w_l,w_m)=\delta_{m,N-1-l}$ (note that $N$ is necessarily even).
The linear span of $w_l$ will be denoted
by $W_l\cong\BC$. We will often parametrize the base vectors
by the elements of $\BZ/N\BZ$. We define $I:=\{0,1,\ldots, N/2\}\subset
\{0,\ldots,N-1\}=\BZ/N\BZ$.
We set $I=I_0\sqcup I_1$ where $I_0=\{1,\ldots,\frac{N}{2}-1\},\
I_1=\{0,\frac{N}{2}\}$.

We choose another quadratic space $V$ of type $-\varepsilon=1$ decomposed into
direct sum $V=\bigoplus_{l\in\BZ/N\BZ}V_l$ such that $V_l$ is orthogonal to
$V_m$ unless $l+m=0\in\BZ/N\BZ$. Let $d_l$ denote the dimension of $V_l$.
We set $\ul{d}:=(d_l)_{l\in I}$.

We denote by $G_{-\varepsilon}(V_\bullet)$ the Levi subgroup of
$G_{-\varepsilon}(V)$ preserving the decomposition
$V=\bigoplus_{l\in\BZ/N\BZ}V_l$. It is isomorphic to
$O(V_0)\times O(V_{N/2})\times
\prod_{0<l<N/2}GL(V_l)$.

\subsection{Quadratic Chainsaw Quivers}
\label{QCQ}

Following~\cite[2.3]{fr} we consider the affine space $M^\varepsilon_{\ul{d}}$
of collections $(A_l,B_l,p_l,q_l)_{l\in\BZ/N\BZ}$ where
$A_l\in\End(V_l),\ B_l\in\Hom(V_l,V_{l+1}),\ p_l\in\Hom(W_{l-1},V_l),\
q_l\in\Hom(V_l,W_l)$ satisfy the following selfadjointness conditions:
$A_l^*=A_{-l},\ B_l^*=B_{-l-1},\ p_l^*=q_{-l}$. Here we view $p_l$ (resp. $q_l$)
as a vector (resp. covector) of $V_l$ using the identification of all $W_m$
with $\BC$.

Following {\em loc. cit.} we consider the subscheme
$\sM^\varepsilon_{\ul{d}}\subset M^\varepsilon_{\ul{d}}$ parametrizing the
$\ul{d}$-dimensional representations of the Chainsaw Quiver with bilinear form
(or the {\em Quadratic Chainsaw Quiver} for short), cut out by the equations
$A_{l+1}B_l-B_lA_l+p_{l+1}q_l=0\ \forall l$.

Clearly, $\sM^\varepsilon_{\ul{d}}$ is acted upon by the Levi subgroup
$G_{-\varepsilon}(V_\bullet)$, and we denote by $\fZ^\varepsilon_{\ul{d}}$
the categorical quotient $\sM^\varepsilon_{\ul{d}}/\!/G_{-\varepsilon}(V_\bullet)$.

{\bf Assumption.} From now on we set $\varepsilon=-1$.

\subsection{Examples}
\label{examples}
We consider three basic examples in types $C_1,C_2,\widetilde{C}_1$.

\subsubsection{$C_1$}
\label{c1}
We take $N\geq2,\ d_0=\ldots=d_{N/2-1}=0,\ d_{N/2}=d$. We have $V_{N/2}=V=\BC^d,\
A_{N/2}=A=A^*\in\End(V),\ B_1=0,\ p_1=p\in V,\ q_1=q\in V^*,\ q(v)=(p,v)$.
Thus $\sM^\varepsilon_{\ul{d}}=\End^+(V)\oplus V$, and
$\fZ^\varepsilon_{\ul{d}}=(\End^+(V)\oplus V)//O(V)$ where
$\End^+(V)\subset\End(V)$ stands for the linear subspace of selfadjoint
operators (symmetric matrices). By the classical Invariant Theory, the ring
of $O(V)$-invariant functions on $\End^+(V)\oplus V$ is freely generated by
the functions $a_1,\ldots,a_d,b_0,\ldots,b_{d-1}$ where $a_m:=\on{Tr}(A^m)$,
and $b_m=(p,A^mp)$. Hence $\fZ_{\ul{d}}^\varepsilon\simeq\BA^{2d}$.

\subsubsection{$C_2$}
\label{c2}
We take $N=4,\ d_0=0,\ d_1=d_2=d_3=1$. We have $V_1=V_2=V_3=\BC$, and hence
all our linear operators act between one-dimensional vector spaces, and
can be written just as numbers. Hence $M^\varepsilon_{\ul{d}}$ has coordinates
$A_1=A_3,A_2,B_1=B_2,q_2=-p_2,q_1=p_3,q_3=p_1$, and $\sM^\varepsilon_{\ul{d}}$
is cut out by a single equation $B_1(A_1-A_2)=p_2p_3$.
The group $G_{-\varepsilon}(V_\bullet)$ is the product
$GL(V_1)\otimes O(V_2)\simeq \BC^*\times\{\pm1\}$ with coordinates
$c\in\BC^*,\ s=\pm1$. It acts on $M^\varepsilon_{\ul{d}}$ as follows:
$(c,s)\cdot(A_1,A_2,B_1,p_1,p_2,p_3)=(A_1,A_2,csB_1,c^{-1}p_1,sp_2,cp_3)$.
The ring of $\BC^*\times\{\pm1\}$-invariant functions on
$\sM^\varepsilon_{\ul{d}}$ is generated by the functions $A_1,\ A_2,\
b_{12}:=p_2^2,\ b_{01}:=p_1p_3,\ b_{02}:=p_2B_1p_1,\ b_{03}:=B_1^2p_1^2$ with
three quadratic relations: $b_{02}(A_1-A_2)=b_{01}b_{12},\
b_{03}(A_1-A_2)=b_{01}b_{02},\ b_{02}^2=b_{12}b_{03}$. Thus
$\fZ_{\ul{d}}^\varepsilon$ is
a 4-dimensional (noncomplete) intersection of 3 quadrics in $\BA^6$.
According to~\cite{v}, $\fZ_{\ul{d}}^\varepsilon$ is reduced, not $\BQ$-Gorenstein,
but Cohen-Macaulay, normal, and has rational singularities.

\subsubsection{$\widetilde{C}_1$}
\label{tc1}
We take $N=2,\ d_0=d_1=1$. We have $V_1=\BC=V_2$, and hence all our linear
operators act between one-dimensional vector spaces, and
can be written just as numbers. Hence $M^\varepsilon_{\ul{d}}$ has coordinates
$A_1,A_2,B_0=B_1,q_0=p_0,q_1=-p_1$, and $\sM^\varepsilon_{\ul{d}}$
is cut out by a single equation $B_0(A_1-A_0)+p_1q_0=0$.
The group $G_{-\varepsilon}(V_\bullet)$ is the product
$O(V_0)\times O(V_1)\simeq\{\pm1\}\times\{\pm1\}$ with coordinates $(s_1,s_2)$.
The ring of $\{\pm1\}\times\{\pm1\}$-invariant functions on
$\sM^\varepsilon_{\ul{d}}$ is generated by the functions $A_0,\ A_1,\
b_1:=p_1^2,\ b_0:=p_0^2,\ s:=B_0^2$ with a single relation
$b_1b_0-s(A_0-A_1)^2=0$. Note the coincidence with the output
of~\cite[Example~2.8.3]{fr}.

\subsection{Dimension of $\sM^\varepsilon_{\ul{d}}$}
\label{dim M}
We define the {\em factorization morphism} $\Upsilon:\ \sM^\varepsilon_{\ul{d}}
\to\BA^{\ul{d}}=\prod_{l\in I}(\BA^{(1)})^{(d_l)}$ so that the component
$\Upsilon_l$ is just $\on{Spec}A_l$.

\begin{prop}
\label{fibers}
Every fiber of $\Upsilon$ has dimension $\dim G_{-\varepsilon}(V_\bullet)+
\sum_{l\in I}d_l$.
\end{prop}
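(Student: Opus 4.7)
The plan is to combine a direct computation of the generic fiber with a reduction to the central fiber $\Upsilon^{-1}(0)$ via the natural scaling action of $\BC^*$.

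First, the $\BC^*$-action defined by $t\cdot(A_l,B_l,p_l,q_l):=(tA_l,tB_l,tp_l,tq_l)$ preserves $\sM^\varepsilon_{\ul d}$ (each chainsaw relation is homogeneous of degree $2$) and intertwines $\Upsilon$ with the diagonal scaling on $\BA^{\ul d}$. Since the origin lies in the closure of every $\BC^*$-orbit, upper semicontinuity of fiber dimension gives $\dim\Upsilon^{-1}(\ul\sigma)\le\dim\Upsilon^{-1}(0)$ for every $\ul\sigma$, while the same semicontinuity applied on the open locus where the fiber dimension is minimal shows $\dim\Upsilon^{-1}(\ul\sigma)\ge$ the generic fiber dimension. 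Hence it suffices to compute the generic fiber directly and to bound $\dim\Upsilon^{-1}(0)$ from above by the same value.

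For a generic $\ul\sigma$, chosen so that each $\sigma_l$ consists of $d_l$ distinct scalars pairwise disjoint across $l$, each $A_l$ with prescribed characteristic polynomial fills a single orbit under its structure group: an $O(V_l)$-orbit of dimension $d_l(d_l-1)/2$ for $l\in I_1=\{0,N/2\}$ (the centralizer of a regular diagonal symmetric matrix being the finite group $(\pm 1)^{d_l}$), and a $GL(V_l)$-orbit of dimension $d_l^2-d_l$ for $l\in I_0$. With disjoint spectra the Sylvester operator $X\mapsto A_{l+1}X-XA_l$ is invertible, so the relation $A_{l+1}B_l-B_lA_l=-p_{l+1}q_l$ uniquely recovers $B_l$ from $(p_{l+1},q_l)$; the selfadjointness $B_l^*=B_{-l-1}$ is then automatic, since (modulo the sign conventions built into the forms on $V$ and $W$) the adjoint of the relation at index $l$ is the relation at index $-l-1$ and Sylvester uniqueness forces the two solutions to be adjoint. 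The remaining parameters are the free vectors $p_l\in V_l$ for $l\in\BZ/N\BZ$ (with $q_l=p_{-l}^*$ determined), contributing $\sum_{l\in\BZ/N\BZ}d_l$. Summing with the $A$-orbit dimensions gives the generic fiber dimension $\dim G_{-\varepsilon}(V_\bullet)+\sum_{l\in I}d_l$, as claimed.

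It remains to bound $\dim\Upsilon^{-1}(0)$ from above. Stratifying by the Jordan types of the tuple of nilpotent operators $(A_l)_{l\in I}$ (adapted to the symmetric structure at $l\in I_1$), each stratum is a single $G_{-\varepsilon}(V_\bullet)$-orbit with a fixed dimension, and for fixed $A$'s the defining relations become linear in $(B_l,p_l,q_l)$; the codimension of the solution space is then controlled by the cokernels of the (now noninvertible) Sylvester operators and by the centralizers of the $A_l$'s in the structure groups. A stratum-by-stratum dimension count in the spirit of the analogous nonquadratic chainsaw argument of~\cite{fr} yields the required upper bound. This nilpotent-stratum count is the main technical obstacle: the orthogonal centralizers at $l\in\{0,N/2\}$ interact nontrivially with the Jordan structure of the $A_l$'s and force the quadratic count to invoke the Kraft-Procesi classification of nilpotent orbits in classical Lie algebras, whereas the nonquadratic analysis of~\cite{fr} uses only partition-type combinatorics. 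Combining the three ingredients, $\dim\Upsilon^{-1}(\ul\sigma)=\dim G_{-\varepsilon}(V_\bullet)+\sum_{l\in I}d_l$ for every $\ul\sigma\in\BA^{\ul d}$.
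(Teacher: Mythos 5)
Your skeleton is the same as the paper's (which runs the argument of \cite[Proposition~2.11]{fr} and records only the quadratic modifications): a contracting $\BC^*$-action reduces every fiber to the nilpotent zero fiber, the generic fiber is computed directly (your Sylvester-operator count is correct and gives the right number), and the zero fiber is bounded above by a stratified count. But you stop exactly where the quadratic case requires new content. The zero-fiber estimate is not an ``obstacle'' to be recorded; it is the heart of the proof. The one input needed beyond \cite{fr} is that for $l\in I_1$ the set $\BO^+_\lambda\subset\End^+(V_l)$ of selfadjoint nilpotents of Jordan type $\lambda$ is a finite union of $O(V_l)$-orbits, all of dimension $\tfrac12\dim\BO_\lambda$ (Kostant--Rallis, \cite[Proposition~5]{kr}); in particular the selfadjoint nilpotent cone in $\End^+(V_l)$ has dimension $\tfrac{d_l(d_l-1)}{2}=\dim O(V_l)$. (Your claim that each Jordan stratum is a single $G_{-\varepsilon}(V_\bullet)$-orbit is false at the orthogonal vertices, though harmless for dimensions, and the relevant statement is the Kostant--Rallis one for the symmetric pair $(\fgl,\fo)$, not the Kraft--Procesi classification.) With this input the count of \cite{fr} goes through essentially verbatim: for instance the locus of $(A_0,A_1,B_0,p_1,q_0)$ with $A_1B_0-B_0A_0+p_1q_0=0$ and nilpotent $A$'s has dimension at most $\tfrac{d_0(d_0-1)}{2}+d_1^2-d_1+\min(d_0,d_1)+\max(d_0,d_1)$, and summing such estimates over $0\le l<N/2$ yields $\dim\Upsilon^{-1}(0)\le\dim G_{-\varepsilon}(V_\bullet)+\sum_{l\in I}d_l$. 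Since you neither state nor prove the half-dimension fact, your upper bound is asserted rather than proved.

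The lower bound for special fibers is also not justified by what you say. Upper semicontinuity ``applied on the open locus where the fiber dimension is minimal'' gives nothing for a special $\ul\sigma$: a priori $\sM^\varepsilon_{\ul{d}}$ could contain a low-dimensional irreducible component lying over a proper closed subset of $\BA^{\ul{d}}$, and your generic-fiber computation only controls the components dominating the base. The argument that works (and underlies \cite[Proposition~2.11]{fr}) is the Krull bound: $\sM^\varepsilon_{\ul{d}}$ is cut out of the affine space $M^\varepsilon_{\ul{d}}$ by $\sum_{0\le l<N/2}d_ld_{l+1}$ equations (the equations at $l$ and at $-l-1$ are adjoint to one another), so every irreducible component has dimension at least $\dim M^\varepsilon_{\ul{d}}-\sum_{0\le l<N/2}d_ld_{l+1}$, whence every nonempty fiber of $\Upsilon$ has dimension at least this number minus $\sum_{l\in I}d_l$, which equals $\dim G_{-\varepsilon}(V_\bullet)+\sum_{l\in I}d_l$. (Alternatively, factorization of $\Upsilon$ together with translations of the $A_l$ reduces an arbitrary fiber to a product of zero fibers for smaller $\ul d$.) Your $\BC^*$-reduction and generic-fiber computation are fine; these two missing pieces are what actually carry the proposition.
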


\proof The same argument as in the proof of~\cite[Proposition~2.11]{fr}.
We just list the minor changes necessary in the quadratic case. The dimension
estimate for a general fiber is reduced to the the dimension estimate for the
zero fiber where all $A_l$ are nilpotent. By the adjointness condition,
$(A_\bullet,B_\bullet,p_\bullet,q_\bullet)$ is determined by its components
$A_l,\ 0\leq l\leq N/2;\ B_l,\ 0\leq l<N/2;\ p_l,\ 0<l\leq N/2;\ q_l,\
0\leq l<N/2$, and $A_0\in\End^+(V_0),\ A_{N/2}\in\End^+(V_{N/2})$.
Note that $\dim\End^+(V_l)=\frac{d_l(d_l+1)}{2},\ l=0,N/2$, while
$\dim O(V_l)=\frac{d_l(d_l-1)}{2},\ l=0,N/2$. The dimension of the space of
nilpotent selfadjoint operators in $\End^+(V_l)$ equals
$\frac{d_l(d_l-1)}{2},\ l=0,N/2$.
More generally, the space $\BO_\lambda^+\subset\End^+(V_l)$
of nilpotent selfadjoint operators of Jordan type $\lambda$ (a partition of
$d_l$) is a finite union of $O(V_l)$-orbits all of the same dimension
$\dim\BO_\lambda^+=\frac{1}{2}\dim\BO_\lambda$ (where $\BO_\lambda$ is the nilpotent
$GL(V_l)$-orbit consisting of nilpotent matrices of Jordan type $\lambda$),
see~\cite[Proposition~5]{kr}.
The argument of the proof of~\cite[Proposition~2.11]{fr} implies that,
say, the dimension of the space of $(A_0,A_1,B_0,p_1,q_0)$ subject to
$A_1B_0-B_0A_0+p_1q_0=0$ is at most
$\frac{d_0(d_0-1)}{2}+d_1^2-d_1+\min(d_0,d_1)+\max(d_0,d_1)$. Summing up
the similar estimates over $0\leq l<N/2$ we obtain the desired inequality
$\dim\Upsilon^{-1}(0,\ldots,0)\leq\dim G_{-\varepsilon}(V_\bullet)+
\sum_{l\in I}d_l$. The opposite inequality follows from the computation of
the generic fiber of $\Upsilon$, and the proposition follows.
\qed

The following corollary is proved the same way as~\cite[Corollary~2.12]{fr}.
\begin{cor}
\label{ci}
$\sM^\varepsilon_{\ul{d}}$ is an irreducible reduced complete intersection
in $M^\varepsilon_{\ul{d}}$. \qed
\end{cor}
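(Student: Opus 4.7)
The plan is to mimic \cite[Corollary~2.12]{fr}, with \propref{fibers} as the main input. First I would count the parameters of $M^\varepsilon_{\ul d}$ under selfadjointness, and tally the independent scalar equations cutting out $\sM^\varepsilon_{\ul d}$: the $N$ matrix relations $A_{l+1}B_l-B_lA_l+p_{l+1}q_l=0$ pair up under the involution $l\leftrightarrow -l-1$ of $\BZ/N\BZ$, leaving $\sum_{l=0}^{N/2-1}d_ld_{l+1}$ independent scalar equations. A direct calculation then shows that the ``expected'' dimension $\dim M^\varepsilon_{\ul d}-\sum_{l=0}^{N/2-1}d_ld_{l+1}$ equals the bound $\dim G_{-\varepsilon}(V_\bullet)+2\sum_{l\in I}d_l$ supplied by \propref{fibers}.

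By Krull's Hauptidealsatz, every irreducible component of $\sM^\varepsilon_{\ul d}\subset M^\varepsilon_{\ul d}$ has codimension at most the number of defining equations; combined with the matching upper bound above, this forces equality throughout, so each component attains exactly the expected dimension. Consequently $\sM^\varepsilon_{\ul d}$ is a complete intersection (in particular Cohen--Macaulay), and each of its irreducible components dominates $\BA^{\ul d}$ under $\Upsilon$. Irreducibility of $\sM^\varepsilon_{\ul d}$ thus reduces to irreducibility of the generic fiber of $\Upsilon$. Over a point of $\BA^{\ul d}$ where each $A_l$ is regular semisimple and different $A_l$'s have disjoint spectra, the relations $A_{l+1}B_l-B_lA_l=-p_{l+1}q_l$ determine each $B_l$ uniquely from the free data $(p_\bullet,q_\bullet)$, so the generic fiber is an affine space---in particular irreducible and smooth. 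Reducedness of $\sM^\varepsilon_{\ul d}$ then follows from Cohen--Macaulay plus generic reducedness.

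The main obstacle will lie in the boundary nodes $l=0$ and $l=N/2$, where selfadjointness confines $A_0$ and $A_{N/2}$ to the spaces $\End^+(V_0)$ and $\End^+(V_{N/2})$ of symmetric operators; the relevant Jordan strata there are only half-dimensional compared to the unrestricted case, as exploited in the proof of \propref{fibers} via \cite[Proposition~5]{kr}. I will need to check that regular semisimple symmetric operators remain dense in $\End^+$---which is immediate over $\BC$, since such matrices are diagonalizable---and that the generic-fiber analysis stays compatible with the selfadjointness of the $A_l$'s. Once these verifications are in place, the remainder of the argument proceeds exactly as in \cite[Corollary~2.12]{fr}.
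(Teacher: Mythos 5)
Your proposal follows essentially the same route as the paper, whose proof is just a reference to the argument of \cite{fr}, Corollary~2.12: compare Krull's lower bound on the components of $\sM^\varepsilon_{\ul{d}}$ with the upper bound coming from the fiber dimension in \propref{fibers}, conclude complete intersection and hence Cohen--Macaulay, then get reducedness from generic transversality and irreducibility from the fact that every component dominates $\BA^{\ul{d}}$ while the generic fiber is irreducible; your count of $\sum_{l=0}^{N/2-1}d_ld_{l+1}$ independent equations (the $N$ relations pair up under $l\leftrightarrow -l-1$, which has no fixed points since $N$ is even) and the identity $\dim M^\varepsilon_{\ul{d}}-\sum d_ld_{l+1}=\dim G_{-\varepsilon}(V_\bullet)+2\sum_{l\in I}d_l$ are both correct. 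One intermediate claim is false as stated and needs a small repair: the generic fiber of $\Upsilon$ is \emph{not} an affine space, because $\Upsilon_l$ records only $\on{Spec}A_l$, so within a fiber each $A_l$ still varies over the locus of operators (selfadjoint ones at $l=0,N/2$) with the prescribed regular semisimple characteristic polynomial; the fiber is an affine-space bundle (free coordinates $p_\bullet,q_\bullet$, with each $B_l$ uniquely solved from the Sylvester equation since adjacent spectra are disjoint) over the product of these loci. To conclude irreducibility you must note that each such locus is a single irreducible orbit: at inner nodes a regular semisimple $GL(V_l)$-conjugacy class, and at the outer nodes a single $O(V_l)$-orbit in $\End^+(V_l)$ --- a selfadjoint operator with distinct eigenvalues is orthogonally diagonalizable, and its stabilizer $\{\pm1\}^{d_l}$ meets both components of $O(V_l)$, so the orbit is connected. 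With that observation, the generic fiber is irreducible and smooth, submersivity of the defining equations in the $B$-directions gives generic reducedness, and the rest of your argument goes through exactly as in \cite{fr}.
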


\begin{thm}
\label{normal}
$\fZ^\varepsilon_{\ul{d}}$ is a reduced irreducible normal scheme.
\end{thm}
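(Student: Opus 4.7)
My plan is to deduce \thmref{normal} from Serre's criterion $(R_1)+(S_2)$, closely following the strategy of \cite{fr} in the type $A$ case. The starting point is \corref{ci}: the scheme $\sM^\varepsilon_{\ul{d}}$ is an irreducible reduced complete intersection in affine space, and is therefore Cohen--Macaulay. Since the base field is of characteristic zero and $G_{-\varepsilon}(V_\bullet)$ is reductive, irreducibility and reducedness descend to the invariant ring $\BC[\sM^\varepsilon_{\ul{d}}]^{G_{-\varepsilon}(V_\bullet)}$ (the invariant subring of a domain is a domain), and by the Hochster--Roberts theorem the Cohen--Macaulay property descends as well. This already yields irreducibility, reducedness, and the condition $(S_2)$ for $\fZ^\varepsilon_{\ul{d}}$.

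It remains to verify $(R_1)$, i.e.\ to produce a smooth open subset of $\fZ^\varepsilon_{\ul{d}}$ whose complement has codimension at least $2$. The natural candidate is the image $\overset{\circ}{\fZ}{}^\varepsilon_{\ul{d}}$ of the stable locus $\sM^\varepsilon_{\ul{d}}\cap M^s_{\ul{d}}=\sM^\varepsilon_{\ul{d}}\cap M^c_{\ul{d}}$ announced in~\ref{112}. Concretely I would (a) prove that $G_{-\varepsilon}(V_\bullet)$ acts freely on $\sM^\varepsilon_{\ul{d}}\cap M^s_{\ul{d}}$, so that the quotient is smooth of the expected dimension $\dim\sM^\varepsilon_{\ul{d}}-\dim G_{-\varepsilon}(V_\bullet)=\sum_{l\in I}d_l$, using \propref{fibers} for the left hand side; (b) prove density of this stable locus inside $\sM^\varepsilon_{\ul{d}}$, so that $\overset{\circ}{\fZ}{}^\varepsilon_{\ul{d}}$ is open dense in $\fZ^\varepsilon_{\ul{d}}$; and (c) stratify the unstable complement by the dimension vector of a maximal offending graded subspace $V'_\bullet\subset V_\bullet$ (violating either stability or costability) and bound the dimension of each $G_{-\varepsilon}(V_\bullet)$-saturated stratum so that its image in $\fZ^\varepsilon_{\ul{d}}$ has codimension $\geq 2$.

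The hardest step is (c), the codimension estimate for the unstable strata. In the type $A$ argument of \cite{fr} a bad $V'_\bullet$ splits the chainsaw data into block-upper-triangular pieces and the estimate is a combinatorial dimension count; in our quadratic setting one has to work with graded subspaces compatible with the pairing $V_l\times V_{-l}\to\BC$, and the two fixed nodes $l=0,N/2$ require separate bookkeeping, since the acting group there is $O(V_l)$ rather than $GL(V_l)$ and the relevant nilpotent variety of selfadjoint operators in $\End^+(V_l)$ has only half the dimension of the full nilpotent variety (as already used in the proof of \propref{fibers} via \cite[Proposition~5]{kr}). In the same vein, step (a) must rule out the central $\{\pm 1\}$-stabilizers at those two nodes, which is precisely where the combined $M^s=M^c$ condition enters: nondegeneracy of the pairing between $p_\bullet$ and $q_\bullet$ data at the orthogonal nodes forbids any central involution from fixing the point. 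Once (a), (b), (c) are in hand, Serre's criterion immediately gives normality.
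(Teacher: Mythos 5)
Your reduction of reducedness, irreducibility and $(S_2)$ to \corref{ci} via Hochster--Roberts is fine and is implicitly the same first step as in the paper. The genuine gap is in step (c): the image of the unstable locus under the quotient map is exactly the boundary $\fZ^\varepsilon_{\ul{d}}\setminus\overset{\circ}{\fZ}{}^\varepsilon_{\ul{d}}$, and this boundary has components of codimension $1$, not $\geq 2$. Already in the basic case~\ref{c1} one has $\fZ^\varepsilon_{\ul{d}}\simeq\BA^{2d}$ with the open zastava being the complement of a resultant-type hypersurface; in general the boundary strata where the defect is a single simple coroot are divisors. So no stratification bookkeeping can make the unstable image have codimension $2$, and smoothness of the stable locus together with $(S_2)$ cannot yield $(R_1)$: you would still have to control $\fZ^\varepsilon_{\ul{d}}$ at generic points of these boundary divisors, which your plan never touches. (A minor additional slip: the expected dimension of the quotient is $2\sum_{l\in I}d_l$, not $\sum_{l\in I}d_l$, as follows from \propref{fibers} plus the dimension of the base of $\Upsilon$.)

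The paper closes exactly this hole by a different mechanism: instead of the stable locus it uses the larger open set $U\subset\fZ^\varepsilon_{\ul{d}}$, the preimage under the factorization morphism of the configurations in $\BA^{\ul{d}}$ where at most two points collide. The complement of that locus does have codimension $\geq 2$ (using the equidimensionality of the fibers from \propref{fibers}), and normality on $U$ is checked by the factorization property, which reduces it to the explicit low-rank computations: Example~\ref{c1} (two points of an outmost color), Example~\ref{c2} (outmost meets innermost), Example~\ref{tc1} (two different outmost colors), plus the two innermost-color cases from the type $A$ paper. Note that $U$ contains generic points of the boundary divisors and is genuinely singular there -- e.g.\ the $C_2$ check in Example~\ref{c2} is a $4$-dimensional intersection of three quadrics whose normality is established by explicit (computer-assisted) computation, not by any smoothness statement -- so this local analysis cannot be replaced by the free-action/density considerations of your steps (a) and (b), correct though those are.
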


\proof The same argument as in the proof of~\cite[Theorem~2.7.a)]{fr}.
We just list the minor changes necessary in the quadratic case.
As in {\em loc. cit.} we have to check the normality of the open subscheme
$U\subset\fZ^\varepsilon_{\ul{d}}$ defined as the preimage under the
factorization morphism $\Phi:\ \fZ^\varepsilon_{\ul{d}}\to\BA^{\ul{d}}$ of
an open subset ${\hat U}\subset\BA^{\ul{d}}$ formed by all the colored
configurations where at most 2 points collide. As in {\em loc. cit.} this
reduces to a few basic checks we already performed in the Examples:~\ref{c1}
when two points of the same color 0 or $N/2$ ({\em outmost color} for short)
collide; Example~\cite[2.8.1]{fr} when two points of the same color
$l,\ 0<l<N/2$ ({\em innermost color} for short) collide; Example~\ref{c2}
when a point of an outmost color collides with a point of an inmost color;
Example~\ref{tc1} when two points of different outmost colors collide;
Example~\cite[2.8.2]{fr} when two points of different innermost colors collide.
This completes the proof of the theorem.
\qed

\subsection{Symplectic zastava}
\label{sz}
For $\ol{d}=(d_0,\ldots,d_{N/2})\in\BN^I$ we consider the affine zastava space for symplectic
Lie group $G=Sp_N$ introduced in~\cite{bfg} and denoted
${\mathfrak U}^{\ol{d}}_{G;B}$. In the present paper we will denote it
$Z_\varepsilon^{\ol{d}}$. It is a reduced irreducible affine scheme containing
as an open subscheme the (smooth) moduli space
$\overset{\circ}{Z}{}_\varepsilon^{\ol{d}}$
of degree $\ol{d}$ based maps from $\BP^1$ to the affine flag scheme of
$G=Sp_N$.

Recall that $Sp_N$ is the fixed point subgroup of the involutive pinning-preserving outer
automorphism $\sigma:\ SL_N\to SL_N$. This automorphism acts on the affine
flag scheme of $SL_N$, and on the zastava spaces $Z^{\ul{d}}$
for $SL_N$. More precisely, we define
$\ul{d}=(\widetilde{d}_0,\ldots,\widetilde{d}_{N-1})\in\BN^N$
as follows: for $0\leq l\leq N/2$ we have $\widetilde{d}_l=d_l$, and for
$0<l<N/2$ we set $\widetilde{d}_{N-l}=d_l$. Then $\sigma$ acts on
$Z^{\ul{d}}$, and the fixed point subscheme (with the reduced
closed subscheme structure) is isomorphic to $Z_\varepsilon^{\ol{d}}$.
In other words, $Z_\varepsilon^{\ol{d}}$ is the closure in $Z^{\ul{d}}$
of $\overset{\circ}{Z}{}_\varepsilon^{\ol{d}}\cong
(\overset{\circ}{Z}{}^{\ul{d}})^\sigma$.

Recall the chainsaw quiver variety $\fZ_{\ul{d}}$ introduced
in~\cite{fr}. Theorem~\cite[2.7.b)]{fr} constructs a morphism
$\eta:\ \fZ_{\ul{d}}\to Z^{\ul{d}}$,
and~\cite[Theorem~3.5]{bf} proves that $\eta$ is an isomorphism. We will
identify $\fZ_{\ul{d}}$ and $Z^{\ul{d}}$ via $\eta$.
Under this identification the open subscheme
$\overset{\circ}{Z}{}^{\ul{d}}$ corresponds to the open subscheme
$\overset{\circ}{\fZ}{}_{\ul{d}}$ formed by the closed orbits of
stable and costable quadruples $(A_\bullet,B_\bullet,p_\bullet,q_\bullet)$, i.e.
such that $V_\bullet$ is generated by the action of $A_\bullet,B_\bullet$ from
the image of $p_\bullet$, and also $V_\bullet$ contains no nonzero subspaces
in $\on{Ker}q_\bullet$ closed with respect to $A_\bullet,B_\bullet$.
The fixed point subscheme $(\overset{\circ}{\fZ}{}_{\ul{d}})^\sigma$
coincides with $\overset{\circ}{\fZ}{}_{\ul{d}}^\varepsilon$: the open
subscheme of $\fZ_{\ul{d}}^\varepsilon$ formed by the closed orbits of
stable (equivalently, costable) quadruples
$(A_\bullet,B_\bullet,p_\bullet,q_\bullet)\in\sM_{\ul{d}}^\varepsilon$,
cf.~\cite[Table~1 and~Proposition~3.3]{bs}.

\begin{lem}
\label{miss}
The closed embedding $\overset{\circ}{\fZ}{}_{\ul{d}}^\varepsilon\cong
(\overset{\circ}{\fZ}{}_{\ul{d}})^\sigma\hookrightarrow
\overset{\circ}{\fZ}{}_{\ul{d}}$ extends to the closed embedding
$\fZ_{\ul{d}}^\varepsilon\hookrightarrow\fZ_{\ul{d}}$.
\end{lem}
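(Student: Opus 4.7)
The plan is to construct $\pi$ from functoriality of categorical quotients, reduce the closed-embedding claim to the surjectivity of an invariant ring map, verify that surjectivity locally by factorization in the spirit of the proof of Theorem~\ref{normal}, and then globalize using normality of both sides.

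\textbf{Construction.} The closed embedding $\sM^\varepsilon_{\ul{d}}\hookrightarrow\sM_{\ul{d}}$ is exactly the scheme-theoretic fixed locus of the involution~$\sigma$, and is equivariant for the inclusion of reductive subgroups $G_{-\varepsilon}(V_\bullet)=G(V_\bullet)^\sigma\hookrightarrow G(V_\bullet)$. Passing to categorical quotients yields the desired morphism $\pi\colon\fZ^\varepsilon_{\ul{d}}\to\fZ_{\ul{d}}$; it factors through the $\sigma$-fixed closed subscheme of $\fZ_{\ul{d}}$, and restricts on the stable--costable locus to the given closed embedding $\overset{\circ}{\fZ}{}_{\ul{d}}^\varepsilon\cong(\overset{\circ}{\fZ}{}_{\ul{d}})^\sigma\hookrightarrow\overset{\circ}{\fZ}{}_{\ul{d}}$.

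\textbf{Surjectivity of invariants; reduction to local cases.} Both sides being affine, $\pi$ is a closed embedding if and only if the induced ring map
$$\pi^*\colon\BC[\sM_{\ul{d}}]^{G(V_\bullet)}\longrightarrow\BC[\sM^\varepsilon_{\ul{d}}]^{G_{-\varepsilon}(V_\bullet)}$$
is surjective. The map $\pi$ is compatible with the factorization morphisms to $\BA^{\ul{d}}$, so following the pattern of the proof of Theorem~\ref{normal} the factorization property of both chainsaw categorical quotients reduces surjectivity to its verification over the open $\hat U\subset\BA^{\ul{d}}$ of configurations with at most two coincident points, and then by factorization of fibers further to the basic two-point configurations: two points of a single outmost color~(\ref{c1}), of a single innermost color~(\cite[Example~2.8.1]{fr}), of two different outmost colors~(\ref{tc1}), of two different innermost colors~(\cite[Example~2.8.2]{fr}), and of one outmost and one innermost color~(\ref{c2}). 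In each case the displayed generators of $\BC[\fZ_{\ul{d}}^\varepsilon]$ -- the traces $\Tr(A^m)$, the contractions $(p,A^mp)$, and the $b_{ij}$ -- are evident restrictions of the standard closed-loop and open-path $GL$-invariants generating $\BC[\fZ_{\ul{d}}]$, which proves surjectivity over $\Phi^{-1}(\hat U)$.

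\textbf{Globalization and main obstacle.} Since the complement $\BA^{\ul{d}}\setminus\hat U$ has codimension $\geq 2$ and both $\fZ_{\ul{d}}$ (by~\cite{fr}) and $\fZ_{\ul{d}}^\varepsilon$ (by Theorem~\ref{normal}) are normal, a Hartogs-type argument extends local surjectivity from $\Phi^{-1}(\hat U)$ to all of $\fZ_{\ul{d}}^\varepsilon$: any invariant regular function on the source, determined on $\Phi^{-1}(\hat U)$ by pullback of a regular function from the corresponding open in $\fZ_{\ul{d}}$, extends uniquely across the codimension-two locus by normality, yielding a global lift. The genuine technical obstacle is the mixed outmost--innermost case~\ref{c2}: one must check that each of the three quadratic relations $b_{02}(A_1-A_2)=b_{01}b_{12}$, $b_{03}(A_1-A_2)=b_{01}b_{02}$, $b_{02}^2=b_{12}b_{03}$ descends from an honest identity among $GL$-invariants of the full chainsaw quiver, and that the sign and normalization conventions of the bilinear forms on $W$ and $V$ are consistent enough that each $b_{ij}$ is literally the restriction of a regular $GL$-invariant polynomial (rather than merely an algebraic function of such).
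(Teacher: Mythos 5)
Your reduction is the right one (both quotients are affine, so the claim is exactly surjectivity of $\BC[\sM_{\ul{d}}]^{\prod GL(V_l)}\to\BC[\sM^\varepsilon_{\ul{d}}]^{G_{-\varepsilon}(V_\bullet)}$), but the route you take after that has a genuine gap, and it also defers the one point that actually constitutes the proof. The globalization step does not work as stated: what your local analysis over $\hat U$ could give is, for each $f\in\BC[\fZ^\varepsilon_{\ul{d}}]$, a lift of $f$ on each member of an open cover of $\Phi^{-1}(\hat U)$ (after an étale-local factorization of \emph{both} sides compatible with $\pi$, which you assert rather than establish -- note the two factorization morphisms land in different configuration spaces, symmetric versus $\BZ/N\BZ$-colored). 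To run the Hartogs extension on the normal variety $\fZ_{\ul{d}}$ you need a \emph{single} regular function on all of $\Phi^{-1}(\hat U)$ pulling back to $f$; local lifts are only well defined modulo the ideal of the image, and you give no argument that they glue (this is an $H^1$-type obstruction on a non-affine open set, not something normality of $\fZ^\varepsilon_{\ul{d}}$ or of $\fZ_{\ul{d}}$ resolves). So "surjectivity over $\Phi^{-1}(\hat U)$" in the sense your Hartogs step needs is not established.

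Moreover, the item you flag at the end as the "genuine technical obstacle" -- that each invariant of the quadratic chainsaw quiver is \emph{literally} the restriction of a $GL$-invariant of the full chainsaw quiver -- is in fact the whole content of the lemma, and it must be checked for the generators of the full invariant ring (first fundamental theorem for $O(V_0)\times O(V_{N/2})\times\prod_{0<l<N/2}GL(V_l)$), not only in the five two-point models; once checked, no factorization, no Hartogs and no normality are needed. This is exactly the paper's proof: every generator is a trace of a loop or a contraction $(p_k, Xp_l)$ with $X$ a word in $A_\bullet, B_\bullet, B_\bullet^*$, and the selfadjointness identities $A_l^*=A_{-l}$, $B_l^*=B_{-l-1}$, $p_l^*=q_{-l}$ rewrite it verbatim as a path or trace function of the full quiver (e.g.\ in your case~\ref{c2}, $b_{12}=q_2p_2$, $b_{01}=q_3p_3$, $b_{02}=q_2B_1p_1$, $b_{03}=q_3B_2B_1p_1$), which is the required $GL$-invariant extension. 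Finally, your worry that the quadratic relations must "descend" is vacuous: surjectivity only requires the generators to lift; whatever relations hold among their restrictions hold automatically.
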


\proof It suffices to check that any $G_{-\varepsilon}(V_\bullet)$-invariant
function on $\sM^\varepsilon_{\ul{d}}$ extends to a
$\prod_{l=0}^{N-1}GL(V_l)$-invariant function on $\sM_{\ul{d}}$.
This is immediately seen on the generators provided by the classical Invariant
Theory.
\qed

Now since $\overset{\circ}{\fZ}{}_{\ul{d}}^\varepsilon$
is dense in $\fZ_{\ul{d}}^\varepsilon=(\fZ_{\ul{d}})^\sigma$ we conclude
that $\fZ_{\ul{d}}^\varepsilon\subset\fZ_{\ul{d}}=Z^{\ul{d}}$
coincides with the closure of
$(\overset{\circ}{\fZ}{}_{\ul{d}})^\sigma$ in
$\fZ_{\ul{d}}=Z^{\ul{d}}$ . Since the symplectic
zastava scheme $Z_\varepsilon^{\ol{d}}$ also coincides with this closure,
we arrive at the following

\begin{thm}
\label{coinc}
There is a canonical isomorphism $\eta:\ \fZ_{\ul{d}}^\varepsilon\iso
Z^{\ol{d}}_\varepsilon$ making the following diagram commutative:
$$
\begin{CD}
\fZ_{\ul{d}}^\varepsilon  @>>> \fZ_{\ul{d}}\\
@V\eta VV  @VV\eta V\\
Z^{\ol{d}}_\varepsilon @>>> Z_{\ul{d}}
\end{CD}
$$
where the horizontal morphisms are the closed embeddings of the $\sigma$-fixed
points subschemes. \qed
\end{thm}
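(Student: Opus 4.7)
The plan is to identify both $\fZ_{\ul{d}}^\varepsilon$ and $Z^{\ol{d}}_\varepsilon$ with the closure of the same open subscheme inside the ambient space $Z^{\ul{d}} \cong \fZ_{\ul{d}}$, after which the isomorphism and the commutative diagram fall out.

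First I would fix the identification $\eta:\fZ_{\ul{d}}\iso Z^{\ul{d}}$ provided by \cite{bf}, and pull back the $\sigma$-action from $Z^{\ul{d}}$ to $\fZ_{\ul{d}}$. The paragraph preceding the theorem already records that $(\overset{\circ}{\fZ}{}_{\ul{d}})^\sigma=\overset{\circ}{\fZ}{}_{\ul{d}}^\varepsilon$ and, via $\eta$, this coincides with $(\overset{\circ}{Z}{}^{\ul{d}})^\sigma\cong\overset{\circ}{Z}{}_\varepsilon^{\ol{d}}$. By \lemref{miss} the closed embedding $\overset{\circ}{\fZ}{}_{\ul{d}}^\varepsilon\hookrightarrow\overset{\circ}{\fZ}{}_{\ul{d}}$ extends to a closed embedding $\fZ_{\ul{d}}^\varepsilon\hookrightarrow\fZ_{\ul{d}}$, so we may regard $\fZ_{\ul{d}}^\varepsilon$ as a closed subscheme of $Z^{\ul{d}}$.

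Next I would invoke \thmref{normal}: $\fZ_{\ul{d}}^\varepsilon$ is irreducible and reduced. Since $\overset{\circ}{\fZ}{}_{\ul{d}}^\varepsilon\subset\fZ_{\ul{d}}^\varepsilon$ is open (it is the preimage of $\overset{\circ}{\fZ}{}_{\ul{d}}\subset\fZ_{\ul{d}}$) and nonempty, it is therefore dense. Hence $\fZ_{\ul{d}}^\varepsilon$ is, with its reduced structure, the scheme-theoretic closure of $\overset{\circ}{Z}{}_\varepsilon^{\ol{d}}$ in $Z^{\ul{d}}$. But this closure is by definition (see \secref{sz}) the symplectic zastava scheme $Z_\varepsilon^{\ol{d}}$, viewed inside $Z^{\ul{d}}$ as the reduced $\sigma$-fixed subscheme. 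The two closed subschemes of $Z^{\ul{d}}$ therefore coincide, and this identification \emph{is} the desired $\eta:\fZ_{\ul{d}}^\varepsilon\iso Z_\varepsilon^{\ol{d}}$. The commutativity of the diagram is then tautological: both horizontal arrows are the inclusions into $Z^{\ul{d}}=\fZ_{\ul{d}}$, and the right vertical arrow is precisely the identification we started from.

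The only non-cosmetic step is checking that the subscheme structures on both sides agree, not merely that the underlying reduced subvarieties do. Here the main inputs are already in place: \thmref{normal} supplies the reducedness and irreducibility of $\fZ_{\ul{d}}^\varepsilon$, while $Z_\varepsilon^{\ol{d}}$ was by construction endowed with the reduced induced structure on the $\sigma$-fixed locus. So there is nothing additional to grind through; the substantive content of the theorem has been shifted into \thmref{normal} and \lemref{miss}, which is where the real difficulty of the symplectic case lives (density of the stable$\cap$costable locus and the complete-intersection property of $\sM^\varepsilon_{\ul{d}}$, yielding irreducibility of the quotient).
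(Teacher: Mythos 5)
Your proposal is correct and follows essentially the same route as the paper: embed $\fZ_{\ul{d}}^\varepsilon$ into $\fZ_{\ul{d}}=Z^{\ul{d}}$ via \lemref{miss}, use reducedness and irreducibility (\thmref{normal}) to see that the $\sigma$-fixed open locus is dense, and identify the resulting closure with $Z_\varepsilon^{\ol{d}}$ by its very definition as the closure of $(\overset{\circ}{Z}{}^{\ul{d}})^\sigma$ in $Z^{\ul{d}}$. The paper's own argument is exactly this comparison of the two closures, so there is nothing to add.
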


\begin{cor}
The symplectic zastava scheme $Z^{\ol{d}}_\varepsilon$ is normal. \qed
\end{cor}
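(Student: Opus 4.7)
The plan is essentially a one-line deduction combining the two preceding results. By \thmref{normal}, the quiver-theoretic quotient $\fZ^\varepsilon_{\ul{d}} = \sM^\varepsilon_{\ul{d}}/\!/G_{-\varepsilon}(V_\bullet)$ is reduced, irreducible, and normal. By \thmref{coinc}, there is a canonical isomorphism $\eta: \fZ^\varepsilon_{\ul{d}} \iso Z^{\ol{d}}_\varepsilon$ of schemes. Since normality is manifestly preserved under isomorphism, the symplectic zastava scheme $Z^{\ol{d}}_\varepsilon$ inherits normality from $\fZ^\varepsilon_{\ul{d}}$.

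There is no genuine obstacle here: all the work has been done in the construction of the Invariant-Theoretic model $\fZ^\varepsilon_{\ul{d}}$ and in verifying the normality assertion of \thmref{normal} (which in turn rests on the complete intersection property \corref{ci} together with the case-by-case analysis of codimension-one degenerations of configurations reducing to Examples~\ref{c1}, \ref{c2}, \ref{tc1}). Once the identification $\fZ^\varepsilon_{\ul{d}} \iso Z^{\ol{d}}_\varepsilon$ is available through $\eta$, there is nothing further to prove. I would therefore simply write the corollary as a direct consequence of \thmref{normal} combined with \thmref{coinc}, with no intermediate steps needed.
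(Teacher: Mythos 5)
Your proposal is correct and matches the paper's own reasoning exactly: the corollary is stated with a \qed precisely because it follows at once from the normality of $\fZ^\varepsilon_{\ul{d}}$ (\thmref{normal}) transported through the canonical isomorphism $\eta$ of \thmref{coinc}. Nothing further is needed.
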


\section{Hamiltonian reduction}

\subsection{Poisson structures}
According to~\cite{fkmm} (cf.~\cite[3.1,3.3]{fr}), the smooth scheme
$\overset{\circ}{Z}{}^{\ul{d}}$ carries a canonical symplectic
structure which extends as a Poisson structure to
$\fZ_{\ul{d}}=Z^{\ul{d}}$. This Poisson structure was
constructed in~\cite{fr} via Hamiltonian reduction. The restriction of
the symplectic form on $\overset{\circ}{Z}{}^{\ul{d}}$ to
$\overset{\circ}{Z}{}_\varepsilon^{\ol{d}}\cong
(\overset{\circ}{Z}{}^{\ul{d}})^\sigma$ coincides with the
canonical symplectic form~\cite{fkmm} on
$\overset{\circ}{Z}{}_\varepsilon^{\ol{d}}$. We conclude that the canonical
symplectic structure on $\overset{\circ}{Z}{}_\varepsilon^{\ol{d}}$ extends
as a Poisson structure to $Z_\varepsilon^{\ol{d}}\cong\fZ^\varepsilon_{\ul{d}}$,
and the $\sigma$-fixed point embedding $\fZ^\varepsilon_{\ul{d}}\hookrightarrow
\fZ_{\ul{d}}$ is Poisson. In the next subsection we will construct
this Poisson structure on $\fZ^\varepsilon_{\ul{d}}$ via Hamiltonian reduction.

\subsection{Classical} Recall the Hamiltonian reduction definition of the Poisson bracket on Zastava spaces in type A (see \cite{fr}). We ``triangulate'' the chainsaw quiver in the following way:

$$\xymatrix{
V_{l-1} \ar@(ur,ul)[]_{A_{l-1}} \ar[rdd]^{B_{l-1}} \ar[rr]_{q_{l-1}} &
& W_{l-1} \ar[ldd]_{p_l} & W_l \ar[rdd]_{p_{l+1}} &
W_{l+1} \ar[rr]_{p_{l+2}} && V_{l+2} \ar@(ur,ul)[]_{A'_{l+2}}       \\
&&&&&&\\
& V_l \ar@(dl,dr)[]_{A'_l} &
V_l \ar@(dl,dr)[]_{A_l} \ar[rr]^{B_l} \ar[ruu]_{q_l} &
& V_{l+1} \ar@(dl,dr)[]_{A'_{l+1}}
& V_{l+1} \ar@(dl,dr)[]_{A_{l+1}} \ar[ruu]^{B_{l+1}} \ar[luu]_{q_{l+1}} &
}$$

For a pair of vector spaces $U,V$ define the following $2$-step nilpotent Lie algebra: $$\fn(U,V):=U\oplus V^*\oplus (U\otimes V^*),$$ where the space $U\otimes V^*$ is central, $[U,U]=[V^*,V^*]=0$, and for $u\in U,\ v^\vee\in V^*$ one has $[u,v^\vee]=u\otimes v^\vee$.

To define the Poisson structure, we attach to each triangle of our graph the following Lie algebra
$$\fa_l:=(\fgl(V_{l})\oplus \fgl(V_{l+1}))\rtimes \fn(V_{l},V_{l+1})
$$ (the semidirect sum is with respect to the tautological action of $\fgl(V_l)$ on $V_l$ and $\fgl(V_{l+1})$ on $V_{l+1}^*$).

Consider the Lie algebra
$$\fa_{\ul{d}}:=\bigoplus\limits_{l\in\BZ/N\BZ}\fa_l=\bigoplus\limits_{l\in\BZ/N\BZ}(\fgl(V_l)\oplus \fgl(V_{l+1}))\rtimes \fn(V_l,V_{l+1})
$$

The coadjoint representation of $\fa_{\ul{d}}$ is the space $\fa_{\ul{d}}^*=\{(A_l,A_l',B_l,p_l,q_l)_{l\in\BZ/N\BZ}\}$, where
$$A_l\in\End(V_l),\quad A_l'\in\End(V_l),\quad B_l\in\Hom(V_l,V_{l+1}),\quad p_l\in V_l,\quad q_l\in V_l^*.$$

Consider the subvariety $S_{\ul{d}}\subset\fa_{\ul{d}}^*$ defined by the following equations:
\begin{equation}\label{poisson-ideal}
B_lA_l+A_{l+1}'B_l+p_{l+1}q_l=0,\quad l\in\BZ/N\BZ.
\end{equation}

Let $\fgl(V_l)_{\diag}$ be the diagonal $\fgl(V_l)$ inside $\fgl(V_l)\oplus\fgl(V_l)\subset\fa_{\ul{d}}$ and $\pi:\fa_{\ul{d}}^*\to\fgl(V_l)^*_{\diag}$ be the projection. Then the Drinfeld Zastava space $\fZ_{\ul{d}}$ is identified with the Hamiltonian reduction $S_{\ul{d}}/\!\!/\!\!/\bigoplus\limits_{l\in\BZ/N\BZ}\fgl(V_l)_{\diag}=\pi^{-1}(0)\cap S_{\ul{d}}/\!\!/\prod\limits_{l\in\BZ/N\BZ}GL(V_l)_{\diag}$. This provides a natural Poisson bracket on $\fZ_{\ul{d}}$.

The involution $\sigma$ acts on the space $\fa_{\ul{d}}^*$ as follows:
$$
A_l\mapsto -A_{N-l}'^*,\ A_l'\mapsto -A_{N-l}^*,\ B_l\mapsto B_{N-l}^*,\ p_l\mapsto q_{N-l}^*,\ q_l\mapsto p_{N-l}^*.
$$

\begin{rem}
{\em Strictly speaking, $\sigma$ is not an involution on $\fa_{\ul{d}}^*$ since $p^{**}=-p$ and $q^{**}=-q$, but becomes an involution after the Hamiltonian reduction.}
\end{rem}

To describe the fixed point set, we consider the half of the chainsaw quiver formed by the vertices $l\in I$. Define the Lie algebra
$$\fa_{\ul{d}}^\varepsilon:=\bigoplus\limits_{l=0}^{\frac{N}{2}-1}\fa_l=\bigoplus\limits_{l=1}^{\frac{N}{2}-1}(\fgl(V_{l})\oplus \fgl(V_{l+1}))\rtimes \fn(V_{l},V_{l+1}).
$$

The coadjoint representation of $\fa_{\ul{d}}^\varepsilon$ is the space
$\fa_{\ul{d}}^{\varepsilon*}=\{(A_l,A_l',B_l,p_l,q_l)_{l\in I_0}, A_0,q_0,B_0,A_{\frac{N}{2}}',p_{\frac{N}{2}} \}$, where
$$A_l\in\End(V_l),\quad A_l'\in\End(V_l),\quad B_l\in\Hom(V_l,V_{l+1}),\quad p_l\in V_l,\quad q_l\in V_l^*.$$

The invariant subvariety $S_{\ul{d}}^\varepsilon\subset\fa_{\ul{d}}^{\varepsilon*}$ is again defined by the equations~(\ref{poisson-ideal}):
$$
B_{l}A_{l}+A_{l+1}'B_{l}+p_{l+1}q_{l}=0,\quad l=0,\ldots,\frac{N}{2}-1.
$$

Let  $\fo(V_l)\subset\fgl(V_l)\subset\fa_{\ul{d}}^\varepsilon$ for $l\in I_1$ be the orthogonal Lie subalgebra and let $\pi:\fa_{\ul{d}}^{\varepsilon*}\to\bigoplus\limits_{l\in I_0}\fgl(V_l)^*_{\diag}\oplus\bigoplus\limits_{l\in I_1}\fo(V_l)^*$ be the projection. Then the symplectic Drinfeld Zastava space $\fZ_{\ul{d}}^\varepsilon$ is identified (as a Poisson variety) with the Hamiltonian reduction: $$
S^\varepsilon_{\ul{d}}\slash\!\!\slash\!\!\slash\bigoplus\limits_{l\in I_0}\fgl(V_l)_{\diag}\oplus\bigoplus\limits_{l\in I_1}\fo(V_l)=(\pi^{-1}(0)\cap S_{\ul{d}}^\varepsilon)
\slash\!\!\slash\prod\limits_{l\in I_0}GL(V_l)_{\diag}\times\prod\limits_{l\in I_1}O(V_l).
$$
We denote the group $\prod\limits_{l\in I_0}GL(V_l)_{\diag}\times\prod\limits_{l\in I_1}O(V_l)$ simply by $G_{\ul{d}}$, and the corresponding Lie algebra $\bigoplus\limits_{l\in I_0}\fgl(V_l)_{\diag}\oplus\bigoplus\limits_{l\in I_1}\fo(V_l)$ by $\fg_{\ul{d}}$.

\subsection{Quantum} The natural quantization of the coordinate ring $\BC[\fa_{\ul{d}}^{\varepsilon*}]$ is the enveloping algebra $U(\fa_{\ul{d}}^\varepsilon)$. It will be convenient to gather the generators of $U(\fa_{\ul{d}}^\varepsilon)$ (i.e. the basis elements of the Lie algebra $\fa_{\ul{d}}^\varepsilon$) into the following $U(\fa_{\ul{d}}^\varepsilon)$-valued matrices:

$$
A_k, B_k, q_k, A'_l, p_l,\ 0\le k < \frac{N}{2},\ 0< l \le \frac{N}{2}.
$$

According to \cite{fr} the coefficients of the following matrices form a subspace $R\subset U(\fa_{\ul{d}}^\varepsilon)$ invariant with respect to the adjoint action:

\begin{equation}\label{2sided-ideal}
B_lA_l+A'_{l+1}B_l+p_{l+1}q_l,\quad l=0,\ldots,\frac{N}{2}-1,\ i=1,\ldots,d_{l+1},\ j=1,\ldots,d_{l}.
\end{equation}

Equivalently, $U(\fa_{\ul{d}}^\varepsilon)R$ is a two-sided ideal in $U(\fa_{\ul{d}}^\varepsilon)$).

The natural quantization of the coordinate ring of the space $\fZ_{\ul{d}}^\varepsilon$ is the \emph{quantum Hamiltonian reduction} $\YO^{\varepsilon}_{\ul{d}}:=\left(U(\fa_{\ul{d}}^\varepsilon)/U(\fa_{\ul{d}}^\varepsilon)(R+\fg_{\ul{d}})\right)^{G_{\ul{d}}}$. The ring $\YO_{\ul{d}}^\varepsilon$ has a natural filtration coming from the PBW filtration on $U(\fa_{\ul{d}}^\varepsilon)$.

\begin{prop}[PBW property]\label{PBW} We have $\gr\ \YO^{\varepsilon}_{\ul{d}}=\BC[\fZ_{\ul{d}}^\varepsilon]$.
\end{prop}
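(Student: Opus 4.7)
The plan is to adapt the argument from the $\fsl_N$ case treated in~\cite{fr}. The PBW filtration on $U(\fa^\varepsilon_{\ul{d}})$ is $G_{\ul{d}}$-stable, is respected by the left ideal $U(\fa^\varepsilon_{\ul{d}})(R+\fg_{\ul{d}})$, and therefore descends to a filtration on $\YO^\varepsilon_{\ul{d}}$. A canonical surjection
\[
\gr\YO^\varepsilon_{\ul{d}}\twoheadrightarrow\BC[\fZ^\varepsilon_{\ul{d}}]
\]
arises by comparing quantum with classical reduction, and we must check injectivity. This splits into two steps: (i) commuting $\gr$ with the passage to $G_{\ul{d}}$-invariants, and (ii) identifying $\gr\bigl(U(\fa^\varepsilon_{\ul{d}})/U(\fa^\varepsilon_{\ul{d}})(R+\fg_{\ul{d}})\bigr)$ with $\BC[\fa^{\varepsilon*}_{\ul{d}}]/(R+\fg_{\ul{d}})$.

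Step (i) is immediate from reductivity: the group $G_{\ul{d}}=\prod_{l\in I_0}GL(V_l)_{\diag}\times\prod_{l\in I_1}O(V_l)$ is reductive, its action on $U(\fa^\varepsilon_{\ul{d}})$ is rational and filtration-preserving, so the invariants functor is exact on rational $G_{\ul{d}}$-modules and commutes with $\gr$. Step (ii) is a Koszul-type assertion: taking principal symbols, the associated gradeds coincide provided the symbols of the defining elements of $R+\fg_{\ul{d}}$ form a regular sequence in the polynomial algebra $\BC[\fa^{\varepsilon*}_{\ul{d}}]$. Since the latter is Cohen--Macaulay, this reduces to the complete intersection property of the scheme $\pi^{-1}(0)\cap S^\varepsilon_{\ul{d}}\subset\fa^{\varepsilon*}_{\ul{d}}$, i.e.\ to matching its codimension against the number of generators of $R+\fg_{\ul{d}}$.

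The complete intersection property is verified by eliminating the primed matrices via the moment map. For $l\in I_0$ the diagonal $\fgl(V_l)_{\diag}$-moment map forces $A'_l=-A_l$; for $l\in I_1$ the $\fo(V_l)$-moment map forces $A_0$ and $A'_{N/2}$ to be selfadjoint. After this elimination the relations $R$ become precisely $A_{l+1}B_l-B_lA_l+p_{l+1}q_l=0$, and one obtains a canonical isomorphism $\pi^{-1}(0)\cap S^\varepsilon_{\ul{d}}\iso\sM^\varepsilon_{\ul{d}}$. A direct dimension count then matches $\dim\fa^{\varepsilon*}_{\ul{d}}-\dim\sM^\varepsilon_{\ul{d}}$ with the total number of equations in $R+\fg_{\ul{d}}$, so that the complete intersection supplied by~\corref{ci} transfers to the required complete intersection in $\fa^{\varepsilon*}_{\ul{d}}$; irreducibility of $\sM^\varepsilon_{\ul{d}}$ simultaneously ensures equidimensionality.

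The main obstacle is the careful bookkeeping around the orthogonal factors at $l\in I_1$: the moment map there contributes $\dim\fo(V_l)=d_l(d_l-1)/2$ equations rather than a full $d_l^2$, and imposes selfadjointness (not vanishing) on $A_0$ and $A'_{N/2}$. One must verify that the decomposition $\fgl(V_l)=\fo(V_l)\oplus\End^+(V_l)$ relative to the trace pairing dovetails with the elimination of primed matrices and with the selfadjointness already built into $M^\varepsilon_{\ul{d}}$; this adjustment is absent in the $\fsl_N$ case but is otherwise straightforward. Once it is made, the dimension count goes through, and the PBW property follows.
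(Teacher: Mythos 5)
Your argument is correct and is essentially the paper's proof: the paper simply invokes the type $A$ argument (Proposition~3.28 of \cite{fr}) word for word, which is exactly the route you take — identify $\pi^{-1}(0)\cap S^\varepsilon_{\ul{d}}$ with $\sM^\varepsilon_{\ul{d}}$, use the complete-intersection property of \corref{ci} to get a regular sequence of symbols, and commute $\gr$ with invariants by reductivity of $G_{\ul{d}}$. Your explicit bookkeeping of the $\fo(V_l)$ factors at $l\in I_1$ (selfadjointness of $A_0$, $A'_{N/2}$ and the count $d_l(d_l-1)/2$) is precisely the ``minor change in the quadratic case'' left implicit by the paper.
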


\begin{proof}
The proof is a word-to-word repetition of that of Proposition~3.28 from \cite{fr}.
\end{proof}

We consider the following elements of $\YO^{\varepsilon}_{\ul{d}}$:
$$a_{l,r}:=\Tr A_l^r,\
r=1,2,\ldots,\ l\in I ;$$
$$b_{l,s}:=q_lA_l^sp_l,\ s=0,1,\ldots,\ l\in I.$$

We also introduce the following elements:

\begin{equation}
b_{k,l;s_k,\ldots,s_l}:=q_l A_l^{s_l}B_{l-1}A_{l-1}^{s_{l-1}}B_{l-2}\ldots B_kA_k^{s_k}p_k,\quad k\le l\in\BZ,\ s_i\in\BZ_{\ge0}.
\end{equation}

\begin{equation}
c_{k,l;s_k,\ldots,s_l}:=B_l A_l^{s_l}B_{l-1}A_{l-1}^{s_{l-1}}B_{l-2}\ldots B_kA_k^{s_k},\quad l=k+mN,\ s_i\in\BZ_{\ge0}.
\end{equation}

From the definitions we get the following relations:

\begin{lem}\label{bb-commutators} Let $k < l+1$. Then $$
[b_{k,l;0,\ldots,0},b_{l+1,0}]=\left\{ \begin{array}{ll}b_{k,l+1;0,\ldots,0,0}&l\pm k\ne 0,2\mod N,\ 2l+2\ne0\mod N\\
2b_{k,l+1;0,\ldots,0,0}&2l+2=0\mod N\\
b_{k,l+1;0,\ldots,0,0}-b_{k-1,l;0,\ldots,0,0}&l+k=0\mod N\\
2(b_{k,l+1;0,\ldots,0,0}-b_{k-1,l;0,\ldots,0,0})&l+k=0\mod N\ \text{and}\ 2l+2=0\mod N\\
b_{k,l+1;0,\ldots,0,0}-b_{k-1,l;0,\ldots,0,0}&l-k=2\mod N\end{array}\right.$$
\end{lem}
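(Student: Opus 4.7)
The plan is to carry out a direct computation in $U(\fa_{\ul d}^\varepsilon)$, where $\fa_{\ul d}^\varepsilon=\bigoplus_{l=0}^{N/2-1}\fa_l$, using the Lie bracket structure alone; the identity will already hold at the level of $U(\fa_{\ul d}^\varepsilon)$, so no use of the moment-map ideal $R$ is needed and the formulas descend to $\YO_{\ul d}^\varepsilon$. The key observation is that generators lying in distinct summands $\fa_m$ commute, and the unique nontrivial elementary commutator among the generators occurring in the two expressions is $[q_l^i,p_{l+1}^j]=B_l^{j,i}$, coming from the Heisenberg bracket $[u,v^\vee]=u\otimes v^\vee$ in the nilpotent ideal $\fn(V_l,V_{l+1})\subset\fa_l$. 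I would expand $b_{k,l;0,\ldots,0}=q_l B_{l-1}\cdots B_k p_k$ and $b_{l+1,0}=q_{l+1}p_{l+1}$ and apply the Leibniz rule iteratively. In the generic situation the only surviving contribution comes from commuting $p_{l+1}$ past the unique factor $q_l$ that lies in $\fa_l$, and reassembling the chain then yields $q_{l+1}B_l B_{l-1}\cdots B_k p_k=b_{k,l+1;0,\ldots,0}$. This settles case (i).

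The four remaining cases will arise whenever indices in the resulting expressions fall outside the half-quiver $\{0,1,\ldots,N/2\}$, forcing invocation of the self-duality conventions $A_l^*=A_{-l}$, $B_l^*=B_{-l-1}$, $p_l^*=q_{-l}$ to re-express them in $\fa_{\ul d}^\varepsilon$, combined with the sign rule $p^{**}=-p$, $q^{**}=-q$ coming from the symplectic form on $W$ (see the Remark preceding the lemma). In case (ii), $2l+2\equiv 0\pmod N$, the endpoint $l+1$ is a self-paired vertex in $\{0,N/2\}$; self-duality identifies $q_{l+1}$ and $p_{l+1}$ with each other's transpose via $p_{l+1}^*=q_{-l-1}=q_{l+1}$, so $b_{l+1,0}$ becomes a sum of squares $\sum_a X_a^2$ of a single family of generators, and commuting against a squared expression produces the extra factor of $2$ (the would-be correction term $[[Y,X_a],X_a]$ vanishes because $[Y,X_a]$ is a $B_l$-entry, which is central in $\fn$). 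In case (iii), $l+k\equiv 0\pmod N$, the factor $q_{l+1}$ unfolds via self-duality to $-p_{k-1}^*$ (since $-l-1\equiv k-1\pmod N$), yielding a second nonzero Heisenberg-type commutator with the outermost factor $p_k$ of $b_{k,l;0,\ldots,0}$; propagating it through the chain produces exactly $-b_{k-1,l;0,\ldots,0}$, the minus sign being precisely the sign supplied by $p^{**}=-p$. Case (v), $l-k\equiv 2\pmod N$, reflects the analogous wrap-around occurring at the other end of the chain, and case (iv) is the conjunction of (ii) and (iii), in which the two effects multiply to give $2(b_{k,l+1}-b_{k-1,l})$.

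The main technical obstacle will be the sign bookkeeping at boundary vertices: without the sign $p^{**}=-p$ coming from the symplectic $W$-form the extra commutators in cases (iii)--(v) would cancel rather than contribute $-b_{k-1,l;0,\ldots,0}$, so each reflection of a generator through a boundary vertex must be tracked with care. Once the self-duality substitutions have been performed with the correct signs and the effect at each vertex is checked against the generic Heisenberg bracket, the remainder of the argument is a routine iteration of the Leibniz rule using only the single elementary commutator $[q_l^i,p_{l+1}^j]=B_l^{j,i}$.
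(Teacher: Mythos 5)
Your overall strategy is the one the paper intends (its ``proof'' is nothing more than ``from the definitions''): the identity is a Leibniz-rule computation already in $U(\fa_{\ul{d}}^\varepsilon)$, using only that entries of $B_m$ are central in $\fn(V_m,V_{m+1})$, that distinct triangles commute, that the unique nontrivial elementary bracket pairs the $V_m$-part (entries of $q_m$) with the $V_{m+1}^*$-part (entries of $p_{m+1}$) of the \emph{same} triangle, and the reflection conventions $q_m=p_{-m}^*$, $B_m=B_{-m-1}^*$ at out-of-range indices. Your generic case and your case (ii) (including the remark that the inner bracket is central, so the ``sum of squares'' simply doubles the term) are correct.

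The gap is in your mechanism for case (iii), and with it your description of case (v): you have interchanged the roles of the two factors of $b_{l+1,0}$. A $q$-type generator only brackets with a $p$-type generator at the next vertex; two $p$-type (or two $q$-type) generators always commute. Hence when $k+l\equiv 0\pmod N$, where $q_{l+1}$ unfolds to $\pm p_{k-1}^*$, its entries \emph{commute} with those of $p_k$ (both are $V^*$-parts of their triangles), so the second Heisenberg commutator you invoke, $[q_{l+1},p_k]$, is zero there (unless one is simultaneously in the other wrap-around case); the extra term in case (iii) actually comes from the factor $p_{l+1}=\pm q_{k-1}^*$ via $[q_{k-1},p_k]=B_{k-1}$. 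The mechanism you assign to (iii) is the one operative in the remaining case, namely $q_{l+1}$ being (a reflection of) $q_{k-1}$ and pairing with $p_k$, which fires exactly when $k\equiv l+2\pmod N$ -- when you carry this out, compare that congruence carefully with the condition as printed in the Lemma. Note also that both wrap-around corrections occur at the same ($p_k$) end of the chain, through the two different factors of $b_{l+1,0}$, not ``at the other end''; and in case (iv) all four elementary brackets contribute (the conditions of (ii) and (iii) together force $k\equiv l+2$), which is how $2(b_{k,l+1;0,\ldots,0}-b_{k-1,l;0,\ldots,0})$ arises. Finally, the sign bookkeeping you defer is precisely what produces the minus in front of $b_{k-1,l;0,\ldots,0}$ (antisymmetry of the elementary bracket combined with the $*$-conventions), so it must actually be performed rather than promised; as written, a literal execution of your plan would attach the correction term to a vanishing bracket in case (iii) and miss its true source.
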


\begin{lem}\label{ab-commutators} For $k\le m\le l$, we have $[a_{m,r},b_{k,l;s_k,\ldots,s_l}]=\lambda b_{k,l;s_k,\ldots,s_m+r-1\ldots,s_l} + L$, where $\lambda\in\BC\backslash\{0\}$, $L\in\YO^{\varepsilon}_{\ul{d}}$ is expressed in $b_{k',l';s_{k'},\ldots,s_{l'}}$ with $l'-k'<l-k$, and $\deg L\le\deg b_{k,l;s_k,\ldots,s_m+r-1\ldots,s_l}$ with respect to the PBW filtration.
\end{lem}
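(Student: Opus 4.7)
The plan is to compute $[a_{m,r},b_{k,l;s_k,\ldots,s_l}]$ directly in $U(\fa^\varepsilon_{\ul{d}})$ by a Leibniz expansion, then use the direct-sum decomposition $\fa^\varepsilon_{\ul{d}}=\bigoplus_l\fa_l$ to isolate the non-zero contributions, and finally pass to $\YO^\varepsilon_{\ul{d}}$ and simplify via the chainsaw relations. Since $a_{m,r}=\Tr A_m^r$ lies in $U(\fgl(V_m))\subset U(\fa_m)$, it commutes with every factor of $b_{k,l;s_k,\ldots,s_l}$ lying in a triangle $\fa_{l'}$ with $l'\ne m$. Inspection of the product $q_lA_l^{s_l}B_{l-1}\cdots B_kA_k^{s_k}p_k$ shows that the only factors sitting in $\fa_m$ are $A_m^{s_m}$ (always), $B_m$ (when $m<l$), and $q_m=q_l$ (when $m=l$); in particular $p_k$ and each $B_{m-1}$ lie in neighbouring triangles and commute with $a_{m,r}$. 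Centrality of $\Tr A_m^r$ in $U(\fgl(V_m))$ kills the $A_m^{s_m}$-contribution, leaving only the commutators with $B_m$ or $q_l$.

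Using the tautological action of $\fgl(V_m)$ on $V_m$ and its dual action on $V_m^*$, a direct application of Leibniz to $\Tr A_m^r$ yields
\[
[\Tr A_m^r,B_m]=-rB_mA_m^{r-1}+R_m,\qquad [\Tr A_l^r,q_l]=-rq_lA_l^{r-1}+R'_l,
\]
where $R_m$ (respectively $R'_l$) is a scalar linear combination of $B_mA_m^j$ (respectively $q_lA_l^j$) for $j<r-1$, coming from the standard PBW normal-ordering corrections. Substituting these into the Leibniz expansion of the original commutator produces the main term $\lambda b_{k,l;s_k,\ldots,s_m+r-1,\ldots,s_l}$ with $\lambda=-r\ne 0$, together with a linear combination of same-range terms $b_{k,l;s_k,\ldots,s_m+j,\ldots,s_l}$ for $j<r-1$, all of strictly smaller PBW degree than the main term.

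To massage these residual same-range corrections into the form claimed for $L$, I would iterate the chainsaw identity $B_jA_j=A_{j+1}B_j-p_{j+1}q_j$, which holds in $\YO^\varepsilon_{\ul{d}}$ once the Hamiltonian reduction identifies $A'_{j+1}=-A_{j+1}$ and the defining ideal $R$ is killed. Each application either propagates an $A$-factor across a neighbouring $B$ (lowering a local exponent), or inserts a factor $p_{j+1}q_j$ in the interior of the monomial, which then splits it as a product $b_{k,j;\ldots}\cdot b_{j+1,l;\ldots}$ of two $b$'s of strictly smaller ranges. The degree bound $\deg L\le\deg b_{k,l;s_k,\ldots,s_m+r-1,\ldots,s_l}$ follows because every chainsaw substitution preserves or lowers the PBW degree. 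The main obstacle is organizing this process as a genuine induction---on $l-k$ and on $\sum_i s_i$---so that each move strictly decreases a well-defined complexity and only strictly smaller-range $b$'s survive in the final expression; the argument is otherwise parallel in structure to its type-$A$ analogue in \cite{fr}, with the symplectic conditions at the boundary vertices $0$ and $N/2$ entering only through the self-adjointness constraints and not through the computation of local commutators.
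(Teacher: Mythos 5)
Your reduction to the factors lying in the triangle $\fa_m$ is only valid when the whole window $[k,l]$ stays inside the fundamental domain $\{0,\ldots,N/2\}$. The lemma is stated, and is used in Proposition~\ref{quantum-generators-affine}, for arbitrary $k\le l\in\BZ$, where the out-of-range letters are the adjoints $A_{-j}=A_j^*$, $B_{-j-1}=B_j^*$, $q_{-j}=p_j^*$ of the fundamental generators; then the triangle-$m$ generators occur at several positions of the monomial. For instance, for $N=2$ one has $b_{0,1;s_0,s_1}=p_1^*A_1^{s_1}B_0A_0^{s_0}q_0^*$, and $a_{0,r}=\Tr A_0^r$ fails to commute both with $B_0$ and with $p_0=q_0^*$, so the leading coefficient is $2r$, not $\pm r$; for $N=4$ and the window $[1,3]$, $B_2=B_1^*$ is built from the same generators as $B_1$, so $[a_{1,r},b_{1,3;s_1,s_2,s_3}]$ contains full-length terms with the exponent raised at position $1$ \emph{and} at the mirror position $3$. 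Thus your closing remark that the self-adjointness constraints ``do not enter the computation of local commutators'' is precisely backwards: they are the reason the type-$C$ statement needs a case analysis in the congruences of $k,m,l$ mod $N$ (exactly as in Lemma~\ref{bb-commutators}), both to identify $\lambda$ and rule out cancellation between the two contributions, and to account for the mirror-position terms, which your argument omits.

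The final ``massaging'' step also cannot work as described. Already in the interior case the Leibniz corrections are not scalar combinations of $B_mA_m^{j}$ (for $r\ge 3$ they carry central factors $\Tr A_m^{i}$), and, more seriously, they contain genuine full-window terms: one computes exactly $[a_{l,2},b_{k,l;s_k,\ldots,s_l}]=2\,b_{k,l;s_k,\ldots,s_l+1}-d_l\,b_{k,l;s_k,\ldots,s_l}$ (the window-length-zero instance is the relation $[a_{k,2},b_{k,s}]=2b_{k,s+1}-d_kb_{k,s}$ hidden in the listed $a$--$b$ relations, since $a_{k,0}=d_k$). The correction $d_l\,b_{k,l;s_\bullet}$ has the same window, and when all $s_i=0$ it is a multiple of $b_{k,l;0,\ldots,0}$, which is in general \emph{not} expressible through shorter windows (it is exactly one of the extra elements Proposition~\ref{quantum-generators-affine} must treat separately). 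The chainsaw identity $B_jA_j\equiv A_{j+1}B_j-p_{j+1}q_j$ only redistributes exponents at the cost of splitting off shorter products; it can never eliminate the all-exponents-zero full-window term, so iterating it will not force $L$ into strictly shorter windows. The workable form of the statement, which the direct Leibniz computation gives and which is all the induction in Proposition~\ref{quantum-generators-affine} requires, lets $L$ contain, besides shorter-window $b$'s and $a$-factors, same-window terms of strictly smaller PBW degree; your attempt to prove the stronger ``shorter windows only'' form is the step that fails.
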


\begin{proof}
Straightforward.
\end{proof}

\begin{lem}\label{C1}\emph{($C_1$ case)} Let $p_0,p_1,A_0,A_1,B$ be the (matrices of) generators of the algebra $\fa_{\ul d}$ for $N=2$. Then the algebra $\YO^{\varepsilon}_{\ul{d}}$ is generated by $a_{l,r}:=\Tr A_l^r,\ b_{l,0}:=p_l^*p_l$ with $l=0,1,\ r=1,\ldots, d_l$.
\end{lem}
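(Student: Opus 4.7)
The plan is to reduce via the PBW property (\propref{PBW}) to a classical invariant-theoretic statement. Let $Y \subseteq \YO^{\varepsilon}_{\ul d}$ denote the subalgebra generated by $\{a_{l, r}, b_{l, 0} : l \in \{0, 1\},\, 1 \le r \le d_l\}$. Then $\gr Y \subseteq \gr \YO^{\varepsilon}_{\ul d} \simeq \BC[\fZ^\varepsilon_{\ul d}]$, and it suffices to check $\gr Y = \BC[\fZ^\varepsilon_{\ul d}]$; the quantum claim $Y = \YO^{\varepsilon}_{\ul d}$ will then follow by induction on the PBW filtration.

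At the classical level, $\BC[\fZ^\varepsilon_{\ul d}]$ is the $O(V_0) \times O(V_1)$-invariant subring of $\BC[\pi^{-1}(0) \cap S^\varepsilon_{\ul d}]$. By Weyl's first fundamental theorem for the orthogonal groups, a generating set consists of the traces $\Tr A_l^r$, the scalar invariants $p_l^* A_l^s p_l$, and the mixed invariants involving $B$ and its $*$-adjoint. The moment map relation~(\ref{poisson-ideal}), $A_1' B + B A_0 + p_1 q_0 = 0$, together with its $*$-adjoint, allows iterative replacement of $A'_1 B$ by $-B A_0 - p_1 q_0$; combined with $O(V_0) \times O(V_1)$-equivariance, every mixed $B$-containing invariant is expressed polynomially in the traces and scalar invariants. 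The scalar invariants $b_{l, s}$ with $s \ge 1$ are then produced by \lemref{ab-commutators}: for $k = l$ the correction term $L$ (supported on $b_{k', l'; \ldots}$ with $l' - k' < l - k = 0$, an empty index set) vanishes, and $[a_{l, 2}, b_{l, s}] = \lambda\, b_{l, s+1}$ with $\lambda \neq 0$; iterated commutators starting from $b_{l, 0}$ produce every $b_{l, s}$. Cayley--Hamilton restricts the range of $r$ to $r \le d_l$, and the compatibility of the $B$-elimination with these commutator identities (via \lemref{bb-commutators}) completes the passage from generators of the classical invariant ring to the listed quantum generators of $Y$.

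The principal technical obstacle is showing that the moment-map elimination of $B$ genuinely yields polynomial expressions in the proposed generators, rather than rational ones. In Example~\ref{tc1} the classical $O(V_0) \times O(V_1)$-invariant ring of $\sM^\varepsilon_{\ul d}$ carries $B^2$ as an independent generator modulo a relation of the form $b_0 b_1 = s (A_0 - A_1)^2$ with $s = B^2$; in the Hamiltonian reduction picture, the presence of the additional generator $A'_1$ and the full two-sided moment map ideal (\ref{2sided-ideal}) must promote this classical relation to a polynomial identity expressing $B^2$ in terms of $a_{l, r}, b_{l, 0}$ and their iterated commutators. Verifying this passage --- that is, showing that no implicit inversion of combinations such as $A_0 + A'_1$ is required --- is the heart of the argument, and relies essentially on the self-adjointness $A_l^* = A_l$ of the symplectic setting together with the vanishing of $\fg_{\ul d} = \fo(V_0) \oplus \fo(V_1)$ in the small-rank vertices that occur here.
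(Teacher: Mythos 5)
Your overall frame (PBW property, Weyl's fundamental theorem to list the invariants, \lemref{ab-commutators} to manufacture the $b_{l,s}$ from $b_{l,0}$ and the $a_{l,r}$) agrees with the paper's, which reduces the lemma to expressing the $B$-containing invariants $p_0^*(B^*B)^mp_0$, $p_1^*B(B^*B)^mp_0$, $\Tr(BB^*)^m$, etc., and then does this by induction on $m$. But the step you yourself call ``the heart of the argument'' is exactly that reduction target, and the way you propose to handle it cannot work. Your classical claim --- that the moment-map relation allows one to eliminate $B$ and write every mixed invariant polynomially in $\Tr A_l^r$ and $p_l^*A_l^sp_l$ --- is false, and Example~\ref{tc1} (which you quote) is already a counterexample: for $d_0=d_1=1$ the ring $\BC[\fZ^\varepsilon_{\ul d}]$ is $\BC[A_0,A_1,b_0,b_1,s]/(b_0b_1-s(A_0-A_1)^2)$ and $s=B^2$ does not lie in the subalgebra generated by $A_0,A_1,b_0,b_1$; classically one recovers $s$ only after inverting $(A_0-A_1)^2$. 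Hence there is no ``classical invariant-theoretic statement'' of the kind you reduce to: $\gr Y$ is strictly larger than the commutative subalgebra generated by the symbols of your generators, and the generation asserted in the lemma is a genuinely quantum phenomenon, not a consequence of a polynomial elimination of $B$ at the level of $\BC[\fZ^\varepsilon_{\ul d}]$.

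What actually closes the gap is a noncommutative computation in which the integer shifts produced by normal ordering make the elimination possible without inverting anything. Already in the case $d_0=d_1=1$, writing $t=A_0+A'_1$ (which commutes with $B$) and using the moment-map relation $p_1q_0\equiv-(BA_0+A'_1B)$ in the quotient, one finds identities of the shape
\begin{equation*}
b_{0,0}\,b_{1,0}=B^2\,(t-\alpha)(t-\alpha-1),\qquad b_{1,0}\,b_{0,0}=B^2\,(t-\beta)(t-\beta-1),
\end{equation*}
with distinct integers $\alpha\neq\beta$; the two polynomials in $t$ are coprime, so a B\'ezout combination $f(t)\,b_{0,0}b_{1,0}+g(t)\,b_{1,0}b_{0,0}=B^2$ exhibits $B^2$ inside the subalgebra generated by $a_{l,r},b_{l,0}$. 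Note that both summands have PBW degree $4$ while $B^2$ has degree $2$: the identity lives entirely in the cancellation of leading terms, so it is invisible to your strategy of verifying $\gr Y=\BC[\fZ^\varepsilon_{\ul d}]$ by classical manipulations (classically both products equal $t^2B^2$ and nothing can be extracted). The paper's ``induction on $m$'' for $\Tr(BB^*)^m$, $p_1^*B(B^*B)^mp_0$, and the other listed invariants is of exactly this nature. So the missing piece is not a routine verification that ``no inversion of $A_0+A'_1$ is required'' in a classical elimination --- no such classical identity exists --- but the quantum identities themselves, which your proposal does not supply.
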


\begin{proof}
According to Lemma~\ref{ab-commutators}, it suffices to check that the invariants of the form $p^*_0(B^*B)^mp_0$, $p^*_1B(B^*B)^mp_0$, $p^*_0(B^*B)^mB^*p_1$, $p^*_1(BB^*)^mp_0$ and $\Tr (BB^*)^m$ can be expressed in $a_{l,r}, b_{l,0}$. This is easily checked by induction on $m$.
\end{proof}

\begin{prop}\label{quantum-generators-affine} The algebra $\YO^{\varepsilon}_{\ul{d}}$ is generated by $a_{l,r},b_{l,s}$ with $l\in I,\ r=1,\ldots, d_l,\ s=0,\ldots,d_l-1$.
\end{prop}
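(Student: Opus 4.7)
The plan is to reduce via the PBW property (Proposition~\ref{PBW}) to the associated graded statement: that the principal symbols of the claimed generators span the classical invariant ring $\BC[\fZ^\varepsilon_{\ul d}]$ as an algebra. By the First Fundamental Theorem of invariant theory for $G_{\ul d}$ acting on $S^\varepsilon_{\ul d}$, that ring is generated by classical trace invariants along open paths (the symbols of $b_{k,l;s_k,\ldots,s_l}$) together with traces along closed cycles (symbols of $\Tr c_{k,l;\ldots}$, present only in the affine case where $l-k$ is a positive multiple of $N$); the orthogonal-group invariants at $l\in I_1$ are of the same trace form in view of~\cite[Proposition~5]{kr}. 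It suffices therefore to show that each of these classical invariants, and its quantum lift, lies in the subalgebra $\CA\subset\YO^\varepsilon_{\ul d}$ generated by $\{a_{l,r},b_{l,s}\colon l\in I,\ 1\le r\le d_l,\ 0\le s\le d_l-1\}$.

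I would proceed by induction on the path length $l-k\ge0$. In the base case $l=k$ the relevant elements are $b_{l,s}$ and $a_{l,r}$ themselves. The quantum Cayley--Hamilton identity for the $d_l\times d_l$ matrix $A_l$ expresses $A_l^s$ for $s\ge d_l$ as a polynomial in $A_l^0,\ldots,A_l^{d_l-1}$ with coefficients in the subalgebra generated by $a_{l,1},\ldots,a_{l,d_l}$; hence $b_{l,s}$ for $s\ge d_l$, and $a_{l,r}$ for $r>d_l$, already lie in $\CA$.

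The inductive step combines Lemmas~\ref{bb-commutators} and~\ref{ab-commutators}. By Lemma~\ref{bb-commutators}, the commutator $[b_{k,l;0,\ldots,0},b_{l+1,0}]$ produces $b_{k,l+1;0,\ldots,0}$ up to a nonzero scalar, sometimes accompanied by a correction term $b_{k-1,l;0,\ldots,0}$ in the special congruence cases $l+k\equiv0\pmod N$ or $l-k\equiv2\pmod N$; pairing several such relations together disentangles the two terms and recovers each $b_{k,l+1;0,\ldots,0}$ as an element of $\CA$ modulo path invariants of length $\le l-k$, which are controlled by the induction. Lemma~\ref{ab-commutators} then supplies arbitrary exponent profiles: commuting with $a_{m,r}\in\CA$ raises $s_m$ by $r-1$ modulo strictly shorter paths, already in $\CA$, so all $b_{k,l;s_k,\ldots,s_l}$ lie in $\CA$. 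Finally, the cyclic trace invariants $\Tr c_{k,l;\ldots}$ in the affine case are eliminated by using the moment-map relations $A_{l+1}B_l=B_lA_l+p_{l+1}q_l$ (valid modulo the defining ideal of $S^\varepsilon_{\ul d}$) to rewrite a closed cycle as a sum of products of shorter open-path invariants; Lemma~\ref{C1} is the $N=2$ prototype of this reduction.

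The hard part will be the inductive step in the special congruence cases of Lemma~\ref{bb-commutators}, where the commutator couples $b_{k,l+1;\ldots}$ and $b_{k-1,l;\ldots}$ --- both of the same length $l-k+1$ --- forcing one to set up and solve a small linear system in the same-length unknowns before the induction can continue. A secondary subtlety is the formulation and verification of the quantum Cayley--Hamilton identity for the matrices $A_l$ with entries in $U(\fa^\varepsilon_{\ul d})$, and the careful bookkeeping required to reduce the cyclic traces in the genuinely affine situation; in both of these, the orthogonal vertices $l\in I_1$ demand separate treatment because the free diagonal $\fgl$-action is replaced there by an $\fo(V_l)$-action.
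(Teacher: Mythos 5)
Your reduction to path-type elements $b_{k,l;s_k,\ldots,s_l}$ and cycle-type elements, and the use of Lemmas~\ref{bb-commutators} and~\ref{ab-commutators} to build up paths of length $<N$ and to adjust exponents, does match the beginning of the paper's argument (the quantum Cayley--Hamilton step is not needed there, but that is harmless). The two places you yourself flag as ``the hard part'' are, however, genuine gaps, and they are exactly where the paper does something you have not proposed. First, in the congruence cases the bracket $[b_{k,l;0,\ldots,0},b_{l+1,0}]$ only ever produces the \emph{difference} $b_{k,l+1;0,\ldots,0}-b_{k-1,l;0,\ldots,0}$ of two same-length unknowns, and you give no second independent relation that would let a ``small linear system'' separate them; the paper never solves such a system --- it uses the brackets only in the range where a single term with nonzero coefficient appears, and obtains the problematic full-winding elements $b_{k,k+mN-1;0,\ldots,0}$ by a different device. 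Second, and more seriously, your treatment of the cycles cannot work as stated: the moment-map relation $A_{l+1}B_l-B_lA_l+p_{l+1}q_l=0$ has no purchase on a pure-$B$ cycle (there is no $A\cdot B$ adjacency to rewrite), and classically the cycle invariant is \emph{not} a polynomial in open-path invariants --- see Example~\ref{tc1}, where $s=B_0^2$ is an independent generator tied to $b_0,b_1$ only through the relation $b_1b_0=s(A_0-A_1)^2$. So ``rewriting a closed cycle as a sum of products of shorter open-path invariants'' is false at the associated graded level; the generation statement is genuinely quantum, and this is precisely the content of Lemma~\ref{C1}, which cannot be reduced to a classical rewriting.

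The missing idea is the folding homomorphism. The paper sets $\ul{D}=(d_0,d_{N/2})$ and defines $\Phi:U(\fa_{\ul{D}}^\varepsilon)\to U(\fa_{\ul{d}}^\varepsilon)$ by $\Phi(A_0)=A_0$, $\Phi(A_1)=A_{N/2}$, $\Phi(B)=B_{N/2-1}\cdots B_0$, $\Phi(p_0)=p_0$, $\Phi(p_1)=B_{N/2-1}\cdots B_1p_1$, observes $\Phi(\YO^\varepsilon_{\ul D})\subset\YO^\varepsilon_{\ul d}$, and notes $\Phi(b_{1,0})=b_{1,N-1;0,\ldots,0}$ and $\Phi(a_{l,r})=a_{l',r}$, all of which are already in your subalgebra $\CA$ by the bracket argument. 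Lemma~\ref{C1} (the two-vertex, $N=2$ case) then says every element of $\YO^\varepsilon_{\ul D}$ is a noncommutative polynomial in these generators, and applying $\Phi$ to the identities $\Phi(b_{0,2m-1;0,\ldots,0})=b_{0,mN-1;0,\ldots,0}$ and $\Phi(c_{0,2m;0,\ldots,0})=c_{0,mN;0,\ldots,0}$ puts both the full-winding paths and the cycles into $\CA$ in one stroke, bypassing both of your problematic steps. You treat Lemma~\ref{C1} only as a ``prototype'' to be imitated; in the paper it is used as an actual ingredient, transported along $\Phi$, and without this (or some substitute argument for the winding paths and cycles) your proof does not go through.
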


\begin{proof}
Arguing in the same way as in Proposition~3.35 of \cite{fr} we reduce the problem to expressing $b_{k,l;0,0,\ldots,0}$ and $c_{0,mN;0,\ldots,0}$ via $a_{l,r},b_{l,s}$.

By Lemma~\ref{bb-commutators} for $l-k<N-1$ we have $b_{k,l;0,0,\ldots,0}=\lambda[[\ldots[b_{k,0}b_{k+1,0}]\ldots,b_{l-1,0}],b_{l,0}]$, where $\lambda$ is a nonzero number.
Thus $b_{k,l;0,0,\ldots,0}$ with $l-k<n$ are expressed via $a_{l,r},b_{l,s}$.  Suppose that $b_{k,k+mN-1;0,0,\ldots,0}$ for some $m\in\BZ_+$ is expressed via $a_{l,r},b_{l,s}$, then for $l-k<N$ we have $b_{k,l+mN;0,0,\ldots,0}=\lambda[[\ldots[b_{k,k+mN-1;0,0,\ldots,0}b_{k+N+1,0}]\ldots,b_{l+N-1,0}],b_{l+N,0}]$. Thus $b_{k,l+mN;0,0,\ldots,0}$ with $l-k<N$ are expressed via $a_{l,r},b_{l,s}$ as well. So, the problem reduces to expressing $b_{k,k+mN-1;0,\ldots,0,0}$ and $c_{0,mN;0,\ldots,0}$ via $a_{l,r},b_{l,s}$.

Let $\ul{D}=(d_0,d_{\frac{N}{2}})$. Define the homomorphism $\Phi:U(\fa_{\ul{D}}^\varepsilon)\to U(\fa_{\ul{d}}^\varepsilon)$ as $$\Phi(A_0)=A_0,\ \Phi(A_1)=A_{\frac{N}{2}},\ \Phi(B)=B_{\frac{N}{2}-1}\cdot B_{\frac{N}{2}-2}\cdot\ldots \cdot B_0,\ \Phi(p_0)=p_0,\ \Phi(p_1)=B_{\frac{N}{2}-1}\cdot\ldots \cdot B_1p_1.$$ Note that $\Phi(\YO^{\varepsilon}_{\ul{D}})\subset\YO^{\varepsilon}_{\ul{d}}$. By Lemma~\ref{C1}, $\YO^{\varepsilon}_{\ul{D}}$ is generated by the elements $a_{l,r}:=\Tr A_l^r,\ b_{l,0}:=p_l^*p_l$ with $l=0,1,\ r=1,\ldots, d_l$. We have $\Phi(a_{0,r})=a_{0,r}$, $\Phi(a_{1,r})=a_{\frac{N}{2},r}$, $\Phi(b_{0,0})=b_{0,0}$, $\Phi(b_{1,0})=b_{1,N-1;0,\ldots,0}$. Thus everything from $\Phi(\YO^{\varepsilon}_{\ul{D}})$ is expressed via $a_{l,r},b_{l,s}$. On the other hand, $\Phi(b_{0,2m-1;0,\ldots,0})=b_{0,mN-1;0,\ldots,0}$ and $\Phi(c_{0,2m;0,\ldots,0})=c_{0,mN;0,\ldots,0}$.
\end{proof}

\section{Yangians}

\subsection{Yangian of $\fsp_{N}$}
\label{yang fin}
Let $(c_{kl})_{k,l=1,2,\ldots,\frac{N}{2}}$ stand for the symmetrized Cartan matrix of
$\fsp_{N}$. That is
$c_{kk}=4$ for $k=\frac{N}{2};\ c_{kk}=2$ for $0<k<\frac{N}{2};\
c_{kl}=0$ for $|k-l|>1;\ c_{kl}=-1$ for $0<k,l<\frac{N}{2}$ and $l=k\pm1;\
c_{kl}=-2$ otherwise.

The Yangian $Y(\fsp_{N})$ is generated
by $\bx_{k,r}^\pm,\bh_{k,r},\ k=1,2,\ldots,\frac{N}{2},\ r\in\BN$, with the following
relations:

\begin{equation}
\label{11}
[\bh_{k,r},\bh_{l,s}]=0,\ [\bh_{k,0},\bx_{l,s}^\pm]=\pm c_{kl}\bx_{l,s}^\pm,
\end{equation}

\begin{equation}
\label{12}
2[\bh_{k,r+1},\bx_{l,s}^\pm]-2[\bh_{k,r},\bx_{l,s+1}^\pm]=
\pm c_{kl}(\bh_{k,r}\bx_{l,s}^\pm+\bx_{l,s}^\pm\bh_{k,r}),
\end{equation}

\begin{equation}
\label{13}
[\bx^+_{k,r},\bx^-_{l,s}]=\delta_{kl}\bh_{k,r+s},
\end{equation}

\begin{equation}
\label{14}
2[\bx_{k,r+1}^\pm,\bx_{l,s}^\pm]-2[\bx_{k,r}^\pm,\bx_{l,s+1}^\pm]=
\pm c_{kl}(\bx_{k,r}^\pm\bx_{l,s}^\pm+\bx_{l,s}^\pm\bx_{k,r}^\pm),
\end{equation}

\begin{equation}
\label{15}
[\bx_{k,r}^\pm,[\bx_{k,p}^\pm,\bx_{l,s}^\pm]]+
[\bx_{k,p}^\pm,[\bx_{k,r}^\pm,\bx_{l,s}^\pm]]=0,\
k=l\pm1,\ k\in I, l\in I_0,\ \forall p,r,s\in\BN.
\end{equation}

\begin{equation}
\label{16}
\sum\limits_{\sigma\in S_3}[\bx_{k,r_{\sigma(3)}},[\bx_{k,r_{\sigma(2)}},[\bx_{k,r_{\sigma(1)}},\bx_{l,s}]]]=0,\
k=l\pm1,\ k\in I, l\in I_1,\ \forall r_1,r_2,r_3,s\in\BN.
\end{equation}

We will consider the ``Borel subalgebra'' $\YO^{\varepsilon}$ of the Yangian, generated by $\bx_{k,r}^+$ and $\bh_{k,r}$.
For a formal variable $u$ we introduce the generating series
$\bh_k(u):=1+\sum_{r=0}^\infty\bh_{k,r}u^{-r-1};\
\bx_k^+(u):=\sum_{r=0}^\infty\bx_{k,r}^+ u^{-r-1}$.

We also consider a bigger algebra $\DO\YO^{\varepsilon}$, the ``Borel subalgebra of the Yangian double'', generated by all Fourier components of the series $\bh_k(u):=1+\sum_{r=0}^\infty\bh_{k,r}u^{-r-1};\
\bx_k^+(u):=\sum_{r=-\infty}^\infty\bx_{k,r}^+ u^{-r-1}$ (i.e. the generating series $\bx_k^+(u)$ are infinite in both positive and negative directions) with the defining relations~(\ref{11},\ref{12},\ref{14},\ref{15},\ref{16}). The algebra $\YO^{\varepsilon}$ is then the subalgebra generated by negative Fourier components of $\bx_k^+(u)$ and $\bh_k(u)$ due to PBW property of the Yangians. We can then rewrite the equations~(\ref{12},\ref{14}) in the following form

\begin{equation}
\label{12'}
\bh_k(u)\bx_l^+(v)\frac{2u-2v-c_{kl}}{2u-2v+ c_{kl}}=\bx_l^+(v)\bh_k(u).
\end{equation}

\begin{equation}
\label{14'}
\bx_k^+(u)\bx_l^+(v)(2u-2v- c_{kl})=(2u-2v+ c_{kl})\bx_l^+(v)\bx_k^+(u).
\end{equation}

The function $\frac{2u-2v-c_{kl}}{2u-2v+ c_{kl}}$ here is understood as a formal power series in $u^{-1},\ v^{-1},\ u^{-1}v$, hence the equation~(\ref{12'}) is well-defined.

Following \cite{fr},
we will use a little bit different generators of the Cartan subalgebra of the Yangian,
\begin{equation}
\bA_k(u):=u^{d_k}+A_{k,0}u^{d_k-1}+\ldots+A_{k,r}u^{d_k-r-1}+\ldots,
\end{equation}
obtained as the (unique) solution of the system of functional equations:

\begin{equation}
\bh_k(u)=\bA_k(u+\frac{1}{2})^{-1}\bA_k(u-\frac{1}{2})^{-1}\bA_{k-1}(u)\bA_{k+1}(u)(u+\frac{1}{2})^{d_k}(u-\frac{1}{2})^{d_k}u^{-d_{k-1}}u^{-d_{k+1}},
\end{equation}
for $k=1,2,\ldots,\frac{N}{2}-1$, and

\begin{equation}
\bh_{\frac{N}{2}}(u)=\bA_{\frac{N}{2}}(u+1)^{-1}\bA_{\frac{N}{2}}(u-1)^{-1}\bA_{\frac{N}{2}-1}(u)\bA_{\frac{N}{2}-1}(u+\frac{1}{2})(u+1)^{d_{\frac{N}{2}}}(u-1)^{d_{\frac{N}{2}}}u^{-d_{\frac{N}{2}-1}}(u+\frac{1}{2})^{-d_{\frac{N}{2}-1}}.
\end{equation}

Here we take $\bA_0(u)=1$

\begin{lem}\label{A-generators}The generators $\bA_k(u)$ of $\DO\YO^{\varepsilon}$ satisfy the relations
\begin{equation}\label{a-rel}
\bA_k(u)\bx_l^+(v)\frac{2u-2v+\frac{c_{kk}\delta_{kl}}{2}}{2u-2v- \frac{c_{kk}\delta_{kl}}{2}}=\bx_l^+(v)\bA_k(u).
\end{equation}
\end{lem}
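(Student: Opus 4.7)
The plan is to establish the relation by induction on $k$, treating the defining system of functional equations as a recursion that expresses each $\mathbf{A}_k(u)$ in terms of $\mathbf{A}_j(u)$ with $j < k$ and the Cartan series $\mathbf{h}_j(u)$. Since $\mathbf{A}_0(u) = 1$ and each $\mathbf{h}_j(u)$ quasi-commutes with $\mathbf{x}_l^+(v)$ by~(\ref{12'}), every Fourier coefficient of $\mathbf{A}_k(u)$ lies in the subalgebra generated by the $\mathbf{h}_{j,r}$. Consequently $\mathbf{A}_k(u)$ quasi-commutes with $\mathbf{x}_l^+(v)$ up to a scalar rational factor, which justifies the ansatz
\[
\mathbf{A}_k(u)\,\mathbf{x}_l^+(v) \;=\; \phi_{k,l}(u-v)\,\mathbf{x}_l^+(v)\,\mathbf{A}_k(u)
\]
for some scalar rational function $\phi_{k,l}$. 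The task reduces to identifying $\phi_{k,l}$ and recognizing the claimed expression.

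Substituting the ansatz into the defining equation for $\mathbf{h}_k(u)$ and moving $\mathbf{x}_l^+(v)$ to the left past each factor $\mathbf{A}_j(u')^{\pm1}$, then comparing with~(\ref{12'}) in the form $\mathbf{h}_k(u)\mathbf{x}_l^+(v) = \tfrac{2u-2v+c_{kl}}{2u-2v-c_{kl}}\mathbf{x}_l^+(v)\mathbf{h}_k(u)$, yields for $1 \le k \le N/2 - 1$ the recursion
\[
\phi_{k+1,l}(u-v) \;=\; \frac{2u-2v+c_{kl}}{2u-2v-c_{kl}}\cdot\frac{\phi_{k,l}(u+\tfrac{1}{2}-v)\,\phi_{k,l}(u-\tfrac{1}{2}-v)}{\phi_{k-1,l}(u-v)},
\]
together with an analogous closing relation extracted from the $\mathbf{h}_{N/2}(u)$ equation (in which the half-shifts $u \pm \tfrac{1}{2}$ are replaced by full-integer shifts $u \pm 1$, and two factors of $\mathbf{A}_{N/2-1}$ at different spectral parameters appear). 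Starting from $\phi_{0,l} \equiv 1$, these relations determine all $\phi_{k,l}$ uniquely.

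It then remains to verify that the proposed formula $\phi_{k,l}(u-v) = \frac{2u-2v-c_{kk}\delta_{kl}/2}{2u-2v+c_{kk}\delta_{kl}/2}$ satisfies this recursion by a short case analysis on the Dynkin adjacency of $k$ and $l$: for $|k-l| \ge 2$ both sides are identically $1$; for $l = k$ with $k, l < N/2$ and for $l = k \pm 1$ with both in the interior, the recursion reduces to telescoping identities of the form $\tfrac{2w}{2w+2}\cdot\tfrac{2w-2}{2w} = \tfrac{2w-2}{2w+2}$ (with $w = u-v$); and the cases bordering the long root $N/2$ are verified using the doubled shifts in the definition of $\mathbf{h}_{N/2}(u)$ to match the doubled Cartan entries $c_{N/2,N/2} = 4$ and $c_{N/2,N/2-1} = -2$. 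The main obstacle is precisely this bookkeeping at the long root, where the half-shifts are replaced by integer shifts and an off-by-one-half error would be easy to make; once the shifts are tracked carefully, the verification in every case is a direct cancellation of linear rational factors.
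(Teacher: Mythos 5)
The paper states this lemma without proof, so your strategy---transport $\bx_l^+(v)$ through the functional equations defining $\bA_k(u)$ in terms of $\bh_k(u)$ and compare with (\ref{12'})---is the natural route. But two steps need repair. First, the inference ``the coefficients of $\bA_k(u)$ lie in the subalgebra generated by the $\bh_{j,r}$, hence $\bA_k(u)$ quasi-commutes with $\bx_l^+(v)$ up to a scalar factor'' is not valid: a general element of that commutative subalgebra (e.g.\ $\bh_{k,0}+\bh_{j,0}$ with $c_{kl}\neq c_{jl}$) does not quasi-commute with $\bx_l^+(v)$ by any scalar series. What (\ref{12'}) actually gives is that moving an element of the commutative subalgebra across $\bx_l^+(v)$ is an algebra homomorphism $\tau_{l,v}$ determined by $\bh_k(u)\mapsto\frac{2u-2v+c_{kl}}{2u-2v-c_{kl}}\bh_k(u)$; the correct argument applies $\tau_{l,v}$ to the whole functional system, verifies that $\phi_{k,l}(u-v)\bA_k(u)$ with your candidate $\phi$'s solves the transported system, and then invokes the uniqueness of its solution (the same uniqueness the paper uses to define the $\bA_k$) to conclude $\tau_{l,v}(\bA_k(u))=\phi_{k,l}(u-v)\bA_k(u)$. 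Relatedly, ``starting from $\phi_{0,l}\equiv 1$ the recursion determines all $\phi_{k,l}$'' is not literally true: the $k$-th relation involves $\phi_{k-1,l},\phi_{k,l},\phi_{k+1,l}$, so $\phi_{1,l}$ is not produced by a forward march; uniqueness must again be imported from the uniqueness of the $\bA_k$ themselves.

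Second, and more concretely, your asserted cancellation at the long root fails for $(k,l)=(\frac{N}{2},\frac{N}{2}-1)$ if the equation for $\bh_{\frac{N}{2}}(u)$ is taken exactly as printed. The factors $\bA_{\frac{N}{2}-1}(u)\bA_{\frac{N}{2}-1}(u+\frac{1}{2})$ transport $\bx^+_{\frac{N}{2}-1}(v)$ with the factor $\frac{2w-1}{2w+1}\cdot\frac{2w}{2w+2}$ (where $w=u-v$ and $\phi_{\frac{N}{2}-1,\frac{N}{2}-1}(w)=\frac{2w-1}{2w+1}$ as the lemma claims), whereas (\ref{12'}) with $c_{\frac{N}{2},\frac{N}{2}-1}=-2$ requires $\frac{2w-2}{2w+2}$; these do not agree. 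The cancellation closes only for the symmetric shifts $\bA_{\frac{N}{2}-1}(u+\frac{1}{2})\bA_{\frac{N}{2}-1}(u-\frac{1}{2})$, and for $N\geq 6$ the interior equations force $\phi_{\frac{N}{2}-1,\frac{N}{2}-1}(w)=\frac{2w-1}{2w+1}$, so with the printed asymmetric shifts no consistent scalar factors exist at all. Hence you must either flag and correct the shift in the $\bh_{\frac{N}{2}}$-equation (evidently a typo in the paper) or your long-root case does not verify as claimed; asserting that ``careful tracking of the shifts'' makes it cancel is exactly the point at which the computation, as written, breaks.
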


\begin{lem}\label{yangian-rel}
Let $\bA_k(u)$ and $\bx_l^+(u)$ be the generating series of $\DO\YO^{\varepsilon}$. Then the series $$
\ba_k(u)=\frac{\bA_k(u-\frac{c_{kk}}{4})}{\bA_k(u+\frac{c_{kk}}{4})}=1-d_ku^{-1}-\sum\limits_{r=1}^\infty\ba_{k,r}u^{-r-1},\quad \bx_l^+(u)
$$ satisfies the following commutator relations
\begin{equation}\label{a-x-defnrel}
[\ba_k(u),\bx_l^+(v)](u-v)=-\frac{\frac{c_{kk}^2}{4}\delta_{kl}}{u-v}\bx_l^+(v)\ba_k(u),\quad [\ba_k(u),\ba_l(v)]=0.
\end{equation}
The series $\ba_k(u),\ \bx_l^+(u)$ generate $\DO\YO^{\varepsilon}$ with the defining relations~(\ref{a-x-defnrel}),~(\ref{14})~and~(\ref{15}), and their negative Fourier components generate $\YO^\varepsilon$.
\end{lem}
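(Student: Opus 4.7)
I address the four claims of the lemma---the expansion of $\ba_k(u)$, the commutation relations~\eqref{a-x-defnrel}, generation of $\DO\YO^\varepsilon$, and the matching presentation---in turn.

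The expansion $\ba_k(u)=1-d_ku^{-1}-\sum_{r\ge1}\ba_{k,r}u^{-r-1}$ follows directly from formally expanding $\bA_k(u-c_{kk}/4)/\bA_k(u+c_{kk}/4)$ using $\bA_k(u)=u^{d_k}+A_{k,0}u^{d_k-1}+\ldots$. The commutativity $[\ba_k(u),\ba_l(v)]=0$ is immediate from~\eqref{11}: since all $\bh_k(u)$ commute, the $\bA_k(u)$ (obtained from the $\bh_k(u)$ by the functional equations) commute, and hence so do the $\ba_k(u)$. For the cross-commutator, Lemma~\ref{A-generators} reads $\bA_k(u)\bx_l^+(v)\bA_k(u)^{-1}=\bx_l^+(v)\cdot f_{kl}(u,v)$ with $f_{kl}(u,v)=(2u-2v-c_{kk}\delta_{kl}/2)/(2u-2v+c_{kk}\delta_{kl}/2)$. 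For $k\ne l$, $f_{kl}=1$ and both $\bA_k(u\pm c_{kk}/4)$ commute with $\bx_l^+(v)$, so does $\ba_k(u)$. For $k=l$, substitute $u\mapsto u\pm c_{kk}/4$ and combine: the resulting scalar is $f_{kk}(u-c_{kk}/4,v)/f_{kk}(u+c_{kk}/4,v)=(u-v-c_{kk}/2)(u-v+c_{kk}/2)/(u-v)^2 = 1-c_{kk}^2/(4(u-v)^2)$, so $\ba_k(u)\bx_k^+(v)\ba_k(u)^{-1}=\bx_k^+(v)\bigl(1-c_{kk}^2/(4(u-v)^2)\bigr)$, which is~\eqref{a-x-defnrel} after multiplying by $(u-v)$.

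For generation, I reconstruct $\bA_k$ from $\ba_k$ by formal inversion. Writing $\log\bA_k(u)=d_k\log u+\phi_k(u)$ with $\phi_k\in u^{-1}\BC[[u^{-1}]]$, one has $\log\ba_k(u)=d_k\log\bigl((u-c_{kk}/4)/(u+c_{kk}/4)\bigr)+\phi_k(u-c_{kk}/4)-\phi_k(u+c_{kk}/4)$. The shift-difference operator $\phi\mapsto\phi(u-c_{kk}/4)-\phi(u+c_{kk}/4)$ sends $u^{-r}$ to a series with leading term $(rc_{kk}/2)u^{-r-1}$ and is therefore triangular and invertible on $u^{-1}\BC[[u^{-1}]]$. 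This recovers $\phi_k$, and thus $\bA_k$, in the subalgebra generated by the $\ba_\bullet$; the functional equation defining $\bA_k$ in terms of $\bh_k$ and the neighboring $\bA_{k\pm1}$ then expresses $\bh_k(u)$ in terms of the $\bA$'s, completing the proof of generation.

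For the presentation, let $\mathcal{D}'$ be the abstract algebra on generators $\ba_{k,r},\bx_{l,s}^+$ subject to~\eqref{a-x-defnrel},~\eqref{14},~\eqref{15}, and~\eqref{16} (the latter where applicable). The canonical map $\mathcal{D}'\to\DO\YO^\varepsilon$ is well-defined by paragraph~1 and surjective by paragraph~2; for its inverse, define $\bA_k,\bh_k\in\mathcal{D}'$ via the same formal inversion and functional equations, and verify the Yangian relations~\eqref{11},~\eqref{12'},~\eqref{14},~\eqref{15},~\eqref{16}. The $\bx^+$-relations are imposed; \eqref{11} comes from $[\ba_k,\ba_l]=0$; and~\eqref{12'} is obtained by reversing the computation of paragraph~1, using that $\bh_k$ is a product of shifts of $\bA$'s. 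The statement on negative Fourier components then follows because the entire reconstruction (inversion and functional equations) preserves series of the form $1+O(u^{-1})$, so negative-Fourier-component subalgebras correspond under the isomorphism. The main technical obstacle is the verification of~\eqref{12'} inside $\mathcal{D}'$: it is a bookkeeping calculation in formal series reversing the substitutions of paragraph~1, but is the only step requiring substantive care.
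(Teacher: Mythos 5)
Your computation of the commutation relations and the generation statement follows the paper's own route: the paper proves \eqref{a-x-defnrel} by exactly your substitution trick (conjugating $\bx_k^+(v)$ by $\bA_k(u\mp\frac{c_{kk}}{4})$ via Lemma~\ref{A-generators} and multiplying the two scalars to get $\frac{(u-v)^2}{(u-v)^2-c_{kk}^2/4}$), and it proves generation by the same triangularity, phrased as ``one can inductively express $\bA_{k,r}$ via $\ba_{k,s}$ with $s\le r+1$'' rather than through your logarithm, which is an equivalent bookkeeping device. Where you genuinely diverge is the last claim, that \eqref{a-x-defnrel}, (\ref{14}), (\ref{15}) are \emph{defining} relations: the paper does not build an inverse homomorphism at all, but argues by a size/flatness comparison --- the abstract algebra on the $\ba$'s and the $\bx^+$-subalgebra modulo \eqref{a-x-defnrel} is $\BC[\ba_{k,r}]\otimes\DO\YO^+$ as a filtered vector space, and since $\DO\YO^{\varepsilon}$ has the matching PBW decomposition, the surjection must be injective. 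Your alternative --- reconstructing $\bA_k$ and then $\bh_k$ inside the abstract algebra and reverifying (\ref{11}), (\ref{12'}) --- is a legitimate strategy and arguably more explicit, but the step you defer as ``bookkeeping'' is precisely where the content sits: you must show that the reconstructed $\bA_k$ satisfies the conjugation relation \eqref{a-rel} in the abstract algebra, which requires pushing $\bx_l^+(v)$ through polynomials in the $\ba_{k,r}$ and then invoking uniqueness of normalized formal-series solutions of the difference equation $f(u-\frac{c_{kk}}{4},v)/f(u+\frac{c_{kk}}{4},v)=1-\frac{c_{kk}^2/4}{(u-v)^2}$; without spelling this out your injectivity argument is a sketch, whereas the paper's counting argument avoids it entirely (at the price of being stated in one terse sentence). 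Two smaller points: you add the Serre relation (\ref{16}) to the presentation, which deviates from the lemma as stated (though it is a defensible reading, since (\ref{16}) concerns the long-root nodes and is part of the definition of $\DO\YO^{\varepsilon}$); and the expansion $1-d_ku^{-1}-\dots$ does not quite ``follow directly'' for $k\in I_1$, where $c_{kk}=4$ makes the direct expansion start with $-2d_ku^{-1}$ --- a normalization issue in the statement itself that your first paragraph silently passes over.
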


\begin{proof} For $k\ne l$ the relation is obvious, for $k=l$ we have
$$
\ba_k(u)\bx_k^+(v)\frac{u-\frac{c_{kk}}{4}-v+\frac{c_{kk}}{4}}{u-\frac{c_{kk}}{4}-v-\frac{c_{kk}}{4}}\cdot\frac{u+\frac{c_{kk}}{4}-v-\frac{c_{kk}}{4}}{u+\frac{c_{kk}}{4}-v+\frac{c_{kk}}{4}}=\bx_k^+(v)\ba_k(u).
$$therefore
$$
\ba_k(u)\bx_k^+(v)\frac{(u-v)^2}{(u-v)^2-\frac{c_{kk}^2}{4}}=\bx_k^+(v)\ba_k(u).
$$

One can inductively express $\bA_{k,r}$ via $\ba_{k,s}$ with $s\le r+1$, hence $\DO\YO^{\varepsilon}$ is generated by $\ba_k(u)$ and $\bx_l^+(u)$. On the other hand, the quotient of $\BC[\ba_{k,r}]_{r=1}^{\infty}\cdot\DO\YO^+$ by the relation~(\ref{a-x-defnrel}) is $\BC[\ba_{k,r}]_{r=1}^{\infty}\otimes\DO\YO^+$ as a filtered vector space. The same argumentation for $\YO^{\varepsilon}$. Hence the assertion.
\end{proof}

\subsection{Yangian of $\widehat\fsp_{N}$}
Let $(c_{kl})_{k,l\in I}$ stand for the symmetrized Cartan matrix of
$\widehat\fsp_{N}$. That is
$c_{kk}=4$ for $k=0$ or $k=\frac{N}{2};\ c_{kk}=2$ for $0<k<\frac{N}{2};\
c_{kl}=0$ for $|k-l|>1;\ c_{kl}=-1$ for $0<k,l<\frac{N}{2}$ and $l=k\pm1;\
c_{kl}=-2$ otherwise.

As for the finite case, we will consider the ``affine Borel Yangian''. This is an associative algebra $\widehat{\YO}{}^\varepsilon$ generated by the series
\begin{equation}
\bx_k^+(u):==1+\sum\limits_{r=0}^{\infty}\bx_{k,r}u^{-r-1},
\end{equation}
\begin{equation}
\bA_k(u):=u^{d_k}+\sum\limits_{r=0}^{\infty}\bA_{k,r}u^{d_k-r-1},
\end{equation}
with $k\in\BZ$ subject to the relations

\begin{equation}
\bA_k(u)\bA_l(v)=\bA_l(v)\bA_k(u),
\end{equation}
\begin{equation}
\bx_k^\pm(u)\bx_l^\pm(v)(2u-2v\mp c_{kl})=\bx_l^\pm(v)\bx_k^\pm(u)(2u-2v\pm c_{kl}),
\end{equation}
where $(c_{kl})$ stands for the symmetrized Cartan matrix of $\wt{C_n}$;
\begin{equation}
\bA_k(u)\bx_l^+(v)\frac{2u-2v+\frac{c_{kl}\delta_{kl}}{2}}{2u-2v-\frac{c_{kl}\delta_{kl}}{2}}=\bx_l^+(v)\bA_k(u),
\end{equation}
in the sense that negative Fourier components of LHS and RHS are equal, and the Serre relations (\ref{15}) and (\ref{16}).

\subsection{Symplectic Yangian and symplectic Zastava spaces}

\begin{thm}\label{yangian-quotient-general}
The algebra $\YO^{\varepsilon}_{\ul{d}}$ is a quotient of the Borel Yangian $\widehat{\YO}{}^\varepsilon$ of
$\widehat\fsp_N$ by some ideal containing $\bA_{k,r}=0\ \text{for}\ r>d_k$.
\end{thm}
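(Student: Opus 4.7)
The plan is to build an explicit algebra homomorphism $\Psi\colon \widehat{\YO}{}^\varepsilon \to \YO^{\varepsilon}_{\ul{d}}$ on generating series, verify that the defining relations are satisfied, and then establish surjectivity via \propref{quantum-generators-affine}.

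On the generators of $\widehat{\YO}{}^\varepsilon$ indexed by $k\in I$ I would set
\[
\Psi\bigl(\bA_k(u)\bigr):=\det\bigl(u\cdot\Id_{V_k}-A_k\bigr),\qquad
\Psi\bigl(\bx_k^+(u)\bigr):=q_k\,(u-A_k)^{-1}p_k,
\]
interpreting $(u-A_k)^{-1}=\sum_{s\ge 0}A_k^s u^{-s-1}$ as a formal power series in $u^{-1}$. Both right-hand sides are manifestly $G_{\ul{d}}$-invariant (recall $q_k=p_k^*$ at the orthogonal vertices $k\in I_1$), so they descend to well-defined elements of the quantum Hamiltonian reduction $\YO^{\varepsilon}_{\ul{d}}$. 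Since $\det(u-A_k)$ is a polynomial of degree exactly $d_k$, one has $\Psi(\bA_{k,r})=0$ for $r>d_k$, giving the additional relations asserted in the theorem.

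To verify the defining relations of $\widehat{\YO}{}^\varepsilon$, observe that $[\bA_k(u),\bA_l(v)]=0$ is immediate: for $k\ne l$ the operators $A_k,A_l$ commute in $\fa_{\ul{d}}^\varepsilon$, and for $k=l$ both series lie in the commutative subalgebra $\BC[A_k]$. The mixed relation~(\ref{a-rel}) and the quadratic $\bx^+\bx^+$ relation~(\ref{14'}) are checked by a direct computation with the commutators in $\fa_{\ul{d}}^\varepsilon$, using the triangularity identity $B_lA_l+A'_{l+1}B_l+p_{l+1}q_l\equiv 0$ modulo the two-sided ideal generated by $R$ (cf.~(\ref{2sided-ideal})) to push $A$'s through $B$'s at the cost of a $pq$-correction. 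This proceeds in parallel with the type $A$ calculation of~\cite{fr}; the new feature is the self-adjointness $A_k^*=A_k,\ p_k^*=q_k$ at the boundary vertices $k\in I_1$, which doubles the diagonal Cartan entry to $c_{kk}=4$ and is precisely reflected in the $c_{kk}$-shifts appearing in the Yangian relations.

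Surjectivity then follows from \propref{quantum-generators-affine}. Indeed, $\YO^{\varepsilon}_{\ul{d}}$ is generated by $a_{l,r}=\Tr A_l^r$ for $r=1,\ldots,d_l$ and by $b_{l,s}=q_lA_l^sp_l$ for $s=0,\ldots,d_l-1$. By Newton's identities the coefficients of $\Psi(\bA_k(u))$ generate the same commutative subalgebra as $a_{k,1},\ldots,a_{k,d_k}$, while the coefficients of the power series $\Psi(\bx_k^+(u))=\sum_{s\ge 0}b_{k,s}u^{-s-1}$ include all the $b_{k,s}$ required by the proposition. Hence every generator of $\YO^{\varepsilon}_{\ul{d}}$ lies in the image of $\Psi$.

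The main obstacle I anticipate is the verification of the Serre relations~(\ref{15})~and~(\ref{16}), particularly at the orthogonal vertices $k\in I_1$. Unlike the bilinear identities, these cubic (resp.\ quartic) relations have no direct source at the Lie-algebra level of $\fa_{\ul{d}}^\varepsilon$: they become valid only modulo the ideal $U(\fa_{\ul{d}}^\varepsilon)(R+\fg_{\ul{d}})$ after quantum reduction. An efficient route is to exploit the $\sigma$-equivariance established in \thmref{coinc}, folding the type-$A$ Yangian homomorphism from~\cite{fr} along $\sigma$ and matching the folded relations with those of $\widehat{\YO}{}^\varepsilon$ for $\widehat{\fsp}_N$; the appearance of $c_{kk}=4$ at $k\in I_1$ is exactly what one expects from the non-simply-laced folding of $\widehat{\fsl}_N$ to $\widehat{\fsp}_N$.
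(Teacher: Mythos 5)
Your overall strategy (define $\Psi$ on generating series, check the relations, deduce surjectivity from Proposition~\ref{quantum-generators-affine}) is the paper's strategy, but the specific assignment you make at the long-root vertices is exactly where the real difficulty sits, and it does not work as written. At $k\in I_1$ one has $q_k=p_k^*$, so $b_k(u)=q_k(u-A_k)^{-1}p_k$ is \emph{quadratic} in the single independent linear coordinate at that vertex; for this reason the relations the paper can actually establish there (Lemma~\ref{b-rel}) are stated for the non-invariant component series $b_k^{(i)}(u)=\sum_s(A_k^sp_k)^{(i)}u^{-s-1}$, which quasi-commute with the Capelli-type series $D_k(u)$ through a factor with shifts $\pm\frac{1}{2}$, and not for the invariant $b_k(u)$, whose commutation with $D_k(u)$ picks up two such factors rather than the single $\pm1$-shift factor that relation~(\ref{a-rel}) requires when $c_{kk}=4$. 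So your sentence claiming that (\ref{a-rel}) and (\ref{14'}) are ``checked by a direct computation\ldots in parallel with the type $A$ calculation'' is precisely the step that fails: with $\Psi(\bx_k^+(u))=q_k(u-A_k)^{-1}p_k$ at $k\in I_1$ there is no homomorphism. The paper's proof instead renormalizes the components, $\wt{b_k^{(i)}}(u)=D_k(u-\frac{1}{2})^{-1}b_k^{(i)}(u)$, and sends $\bx_k^+(u)$ (up to a shift of the argument by $\sum_{m\le k}d_m$) to $\wt{b_k}(u)=\sum_i D_k(u-\frac{1}{2})\wt{b_k^{(i)}}(u)\wt{b_k^{(i)}}(u+1)$; it is the telescoping of the two arguments $u$ and $u+1$ that produces the rational factors (\ref{btilde1}), (\ref{btilde2}) matching $c_{kk}=4$ and $c_{k,k\pm1}=-2$. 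Your closing remark about folding gestures at this phenomenon, but folding does not rescue the map you wrote down --- it changes it; the corrected generator at a folded vertex is a quadratic (Capelli-normalized) expression in the unfolded data, not the naive invariant $b_k(u)$.

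Two further points. First, $\det(u\cdot\Id_{V_k}-A_k)$ is not a well-defined central element for a matrix with noncommuting entries; one must use $D_k(u)$ defined by $a_k(u)=D_k(u-\frac{1}{2})D_k(u+\frac{1}{2})^{-1}$ and identify it with the Capelli determinant via the Newton identity --- this identification is also exactly how the vanishing $\bA_{k,r}\mapsto 0$ for $r>d_k$ is obtained. Second, once the image of $\bx_k^+(u)$ at $k\in I_1$ is $\wt{b_k}(u)$ rather than $b_k(u)$, surjectivity no longer follows by merely quoting Proposition~\ref{quantum-generators-affine}: one must also check that the $b_{k,s}$, $k\in I_1$, are recovered from the coefficients of $D_k(u)$ and $\wt{b_k}(u)$, and the argument shifts in the paper's homomorphism are needed for the mixed relations between adjacent vertices --- their absence from your formulas is another indication that the relation check was not actually carried out.
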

\begin{proof}
For $k\in I,\ l\in I_0$, introduce the following generating series in $\YO^{\varepsilon}_{\ul{d}}$:
\begin{equation}
a_k(u):=1-d_ku^{-1}-\sum\limits_{r=1}^\infty a_{k,r}u^{-r-1}, \quad b_l(u):=\sum\limits_{s=0}^\infty b_{l,s}u^{-s-1}
\end{equation}

For $l\in I_1, i=1,2,\ldots,d_l$, introduce the following generating series in $U(\fa_{\ul{d}}^{\varepsilon})$ (warning: not in $\YO^{\varepsilon}_{\ul{d}}$):
\begin{equation}
b_l^{(i)}(u):=\sum\limits_{s=0}^\infty b_{l,s}^{(i)}u^{-s-1},\quad b_{l,s}^{(i)}:= (A_l^sp_l)^{(i)},
\end{equation}
the $i$-th coordinate of the vector $A_l^sp_l$ in the orthonormal basis of $V_l$. The following relations hold (see \cite{fr}):

\begin{lem}\label{b-rel} The following relations hold:
\begin{equation} (u-v)[b_k(u),b_k(v)]=(b_k(u)b_k(v)+b_k(v)b_k(u))\ \text{for}\ k\in I_0,
\end{equation}
\begin{equation} 2(u-v)[b_k(u),b_l(v+d_l)]=-(b_k(u)b_l(v+d_l)+b_l(v+d_l)b_k(u))\ \text{for}\ k,l\in I_0,\ l=k+1,
\end{equation}
\begin{equation}
(u-v)[a_k(u),b_l(v)]=-\frac{\delta_{kl}}{u-v}b_l(v)a_k(u)\ \text{for}\ k\in I,\ l\in I_0.
\end{equation}
\begin{equation} 2(u-v)[b_k^{(i)}(u),b_k^{(j)}(v)]=c_{kl}(b_k^{(i)}(u)b_k^{(j)}(v)+b_k^{(j)}(v)b_k^{(i)}(u))\ \text{for}\ k\in I_1,\ i,j=1,\ldots,d_l,
\end{equation}
\begin{equation} 2(u-v)[b_k^{(i)}(u),b_l(v+d_l)]=c_{kl}(b_k^{(i)}(u)b_l(v+d_l)+b_l(v+d_l)b_k^{(i)}(u))\ \text{for}\ k\in I_1,\ l=k+1,\ i=1,\ldots,d_l,
\end{equation}
\begin{equation} 2(u-v)[b_k(u),b_l^{(i)}(v+d_l)]=c_{kl}(b_k(u)b_l^{(i)}(v+d_l)+b_l^{(i)}(v+d_l)b_k(u))\ \text{for}\ l\in I_1,\ l=k+1,\ i=1,\ldots,d_l,
\end{equation}
\begin{equation}
(u-v)[a_k(u),b_l^{(i)}(v)]=-\frac{\delta_{kl}}{u-v}b_l^{(i)}(v)a_k(u)\ \text{for}\ k\in I,\ l\in I_1,\ i=1,\ldots,d_l.
\end{equation}
\end{lem}

\begin{proof}
This follows from Propositions~3.24~and~3.32 of \cite{fr}.
\end{proof}

\begin{lem} We have
\begin{equation}
[b_{k,r_2},[b_{k,r_1},b_{l,s}]]+[b_{k,r_1},[b_{k,r_2},b_{l,s}]]=0\quad\text{for}\ k,l\in I_0,\ |k-l|=1.
\end{equation}
\begin{equation}
[b_{k,r_2},[b_{k,r_1},b_{l,s}^{(i)}]]+[b_{k,r_1},[b_{k,r_2},b_{l,s}^{(i)}]]=0\quad\text{for}\ k\in I,\ l\in I_1,\ |k-l|=1,\ i=1,\ldots,d_l,
\end{equation}
\begin{equation}
[b_{k,r_2}^{(i)},[b_{k,r_1}^{(j)},b_{l,s}]]+[b_{k,r_1}^{(j)},[b_{k,r_2}^{(i)},b_{l,s}]]=0\quad\text{for}\ k\in I_1,\ l\in I,\ |k-l|=1\ i,j=1,\ldots,d_k.
\end{equation}
\end{lem}

\begin{proof}
This follows from Proposition~3.32 of \cite{fr}.
\end{proof}

For $l\in I$, let $D_l(u)$ be the (unique) solution of the functional equation
\begin{equation}
a_l(u):=D_l(u-\frac{1}{2})D_l(u+\frac{1}{2})^{-1}.
\end{equation}

We have $$
D_l(u)b_k(v)\frac{2u-2v+\delta_{kl}}{2u-2v-\delta_{kl}}=b_k(v)D_l(u)\ \text{for}\ k\in I_0,$$
and
$$
D_l(u)b_k^{(i)}(v)\frac{2u-2v+\delta_{kl}}{2u-2v-\delta_{kl}}=b_k^{(i)}(v)D_k(u)\ \text{for}\ k\in I_1.$$

Set $\wt{b_k^{(i)}}(u):=D_k(u-\frac{1}{2})^{-1}b_k^{(i)}(u)$. From Lemma~\ref{b-rel}, we have $$
\wt{b_k^{(i)}}(u)\wt{b_k^{(j)}}(v)=\wt{b_k^{(j)}}(v)\wt{b_k^{(i)}}(u).
$$
Note that the rest of the relations for $b_k^{(i)}$ from lemma~\ref{b-rel} remain the same for $\wt{b_k^{(i)}}$.

For $l\in I_1,\ i,j=1,\ldots,d_l$, set
\begin{equation}
\wt{b_l^{(ij)}}(u):=D(u-\frac{1}{2})\wt{b_l^{(i)}}(u)\wt{b_l^{(j)}}(u+1).
\end{equation}
Note that
\begin{equation}\label{btilde1}
\wt{b_l^{(ii)}}(u)\wt{b_l^{(jj)}}(v)\frac{u-v+2}{u-v-2}=\wt{b_l^{(jj)}}(v)\wt{b_l^{(ii)}}(u)
\end{equation} and
\begin{equation}\label{btilde2}
\wt{b_l^{(ii)}}(u)b_k(v)\frac{u-v+1}{u-v-1}=b_k(v)\wt{b_l^{(ii)}}(u)
\end{equation}
for $|k-l|=1$.

For $l\in I_1$ set $\wt{b_l}(u)=\sum\limits_{i=1}^{d_l}\wt{b_l^{(ii)}}(u)$. From Proposition~\ref{quantum-generators-affine}, we see that the algebra $\YO^{\varepsilon}_{\ul{d}}$ is generated by (Fourier coefficients of) $D_l(u)$, $b_k(u)$ for $k\in I_0$ and $\wt{b_k}(u)$ for $k\in I_1$. Now the Theorem reduces to the following
\begin{lem}
There is a homomorphism $\varphi_{\ul{d}}:\widehat{\YO}{}^{\varepsilon}\to\YO^{\varepsilon}_{\ul{d}}$ sending $\bA_k(u)$ to $D_k(u+\sum\limits_{m=1}^kd_m)$ and $\bx_l^+(u)$ to $b_l(u+\sum\limits_{m=1}^ld_m)$ for $l\in I_0$ and to $\wt{b_l}(u+\sum\limits_{m=1}^ld_m)$ for $i\in I_1$.
\end{lem}
\begin{proof}
We need to prove the quadratic and the Serre relations for the elements $D_l(u)$, $b_k(u)$. The quadratic relations follow from the relations~(\ref{btilde1})~and~(\ref{btilde2}). The proof of the Serre relations is entirely similar to that of Proposition~3.32 from \cite{fr}.
\end{proof}

According to the Newton identity (see Theorem~7.1.3 of \cite{mbook}), we have
\begin{equation}
a_l(u)=\frac{C_l(-u+d_l)}{C_l(-u+d_l-1)},
\end{equation}
where $C_l(u)$ is the Capelli determinant. This means that $D(u)=C(-u+d_l-\frac{1}{2})$. In particular, $D_{l,r}=0$ for $r>d_l$.
\end{proof}

\begin{conj} $\YO^{\varepsilon}_{\ul{d}}=\widehat{\YO}{}^{\varepsilon}/\{\bA_{k,r}\ |\ r>d_k\}$.
\end{conj}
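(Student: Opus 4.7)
The approach I would take is to pass to the associated graded algebras via a PBW-type argument. By Theorem~\ref{yangian-quotient-general} the map $\varphi_{\ul{d}}$ factors through a surjection $\bar\varphi:\widehat{\YO}{}^{\varepsilon}/\{\bA_{k,r}\mid r>d_k\}\twoheadrightarrow\YO^{\varepsilon}_{\ul{d}}$. Equip $\widehat{\YO}{}^{\varepsilon}$ with the filtration in which $\bA_{k,r}$ and $\bx_{l,r}^+$ have degree $r+1$; all of the defining relations of Section~4.2 respect this filtration, so it descends to the quotient, and $\bar\varphi$ is filtered into $\YO^{\varepsilon}_{\ul{d}}$ equipped with its PBW filtration. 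Since $\gr\YO^{\varepsilon}_{\ul{d}}=\BC[\fZ^{\varepsilon}_{\ul{d}}]$ by Proposition~\ref{PBW}, the conjecture reduces to showing that $\gr\bar\varphi$ is injective.

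The first step is to describe $\gr\widehat{\YO}{}^{\varepsilon}$ explicitly. The quadratic (RTT-type) commutation relations in Section~4.2 become Poisson relations in the associated graded, and a PBW argument parallel to the one carried out for shifted Yangians in \cite{KWWY} (which in turn goes back to the Yangians of finite type) should identify $\gr\widehat{\YO}{}^{\varepsilon}$ with the polynomial algebra in the symbols $\bar\bA_{k,r}$ ($r\geq0$) and $\bar\bx_{l,r}^+$ ($r\geq0$), equipped with an explicit Poisson structure. Imposing $\bar\bA_{k,r}=0$ for $r>d_k$ then produces a very concrete presentation of $\gr\left(\widehat{\YO}{}^{\varepsilon}/\{\bA_{k,r}\mid r>d_k\}\right)$.

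The second and harder step is to identify this classical presentation with $\BC[\fZ^{\varepsilon}_{\ul{d}}]$. Classically, the generator $\bA_k(u)$ becomes (via the Newton-identity calculation concluding the proof of Theorem~\ref{yangian-quotient-general}) the Capelli-type polynomial whose roots are the eigenvalues of $A_k$, so the vanishing of $\bA_{k,r}$ for $r>d_k$ is automatic on $\fZ^{\varepsilon}_{\ul{d}}$. Similarly, $\bar\bx_{l,r}^+$ corresponds to the symbol of $b_{l,r}$ (or of $\wt{b_l}$ for $l\in I_1$); by Proposition~\ref{quantum-generators-affine} these together generate $\BC[\fZ^{\varepsilon}_{\ul{d}}]$, so the graded map is surjective. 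Injectivity follows once one proves that no new relations appear in $\gr\left(\widehat{\YO}{}^{\varepsilon}/\{\bA_{k,r}\mid r>d_k\}\right)$ beyond those implied by the quadratic and Serre relations, i.e.\ that the ideal $\{\bA_{k,r}\mid r>d_k\}$ does not absorb unexpected Serre consequences and that the resulting graded algebra is reduced and irreducible of the correct Krull dimension (which is controlled by Proposition~\ref{fibers} and Corollary~\ref{ci}).

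The main obstacle is this last graded comparison. The cleanest route is the one already suggested in Subsection~\ref{114}: prove an affine type $C$ analog of the Kamnitzer--Webster--Weekes--Yacobi theorem, asserting that the shifted affine Borel Yangian modulo the corresponding ideal of ``excess'' $\bA_{k,r}$ quantizes the transversal slices in the affine $\widehat{\fsp}_N$-Grassmannian, and then pass to the zastava limit exploited in~\cite{KWWY}. A more pedestrian alternative is to induct on $|\ul{d}|$ using the factorization morphism $\Upsilon$ and the generic factorization of zastava, reducing to the rank-one computations already carried out in Examples~\ref{c1}--\ref{tc1}; the gluing step (checking that Poisson-generated relations at collision loci are exhausted by the known quadratic and Serre relations) is where the novel input specific to type $C$ will be required.
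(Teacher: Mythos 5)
This statement is stated in the paper as a \emph{conjecture}: the paper itself only proves the surjection $\widehat{\YO}{}^{\varepsilon}\twoheadrightarrow\YO^{\varepsilon}_{\ul{d}}$ and the vanishing $\bA_{k,r}\mapsto 0$ for $r>d_k$ (Theorem~\ref{yangian-quotient-general}), and offers no proof that these elements generate the whole kernel. Your proposal does not close this gap either; it is a program rather than a proof, and the two places where it defers the work are exactly the open content of the conjecture. First, your identification of $\gr\widehat{\YO}{}^{\varepsilon}$ with a polynomial algebra on the symbols $\bar\bA_{k,r},\bar\bx^+_{l,r}$ is a PBW/flatness statement for an algebra that is only given by generators and relations; in the affine case $\widehat{\YO}{}^{\varepsilon}$ is, as the introduction stresses, not a subalgebra of any known Yangian of $\widehat{\fsp}_N$, so one cannot import a PBW theorem (the finite-type argument in Lemma~\ref{yangian-rel} uses the PBW property of the Yangian of $\fsp_N$, which is unavailable here), and the appeal to an affine type~$C$ analog of \cite{KWWY} assumes a theorem that has not been proved. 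Second, your ``injectivity follows once one proves that no new relations appear \dots beyond those implied by the quadratic and Serre relations'' restates the conjecture verbatim: showing that the ideal of relations of $\BC[\fZ^{\varepsilon}_{\ul{d}}]$ (equivalently, the kernel of $\gr\bar\varphi$) is generated by the symbols of the $\bA_{k,r}$, $r>d_k$, is precisely what is missing, and neither the dimension count (Proposition~\ref{fibers}, Corollary~\ref{ci}) nor the generation statement (Proposition~\ref{quantum-generators-affine}) supplies it, since surjectivity of $\gr\bar\varphi$ plus correct Krull dimension of the target does not rule out a kernel unless you already know the source is reduced, irreducible and of that dimension --- which is again the unproven point.

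A smaller but real technical issue: the filtration bookkeeping is more delicate than ``give $\bA_{k,r}$ and $\bx^+_{l,r}$ degree $r+1$.'' In the paper's PBW filtration $a_{l,r}=\Tr A_l^r$ has degree $r$ while $b_{l,s}=q_lA_l^sp_l$ has degree $s+2$, the map $\varphi_{\ul{d}}$ involves shifts $u\mapsto u+\sum_m d_m$ mixing Fourier components of different degrees, and for $l\in I_1$ the generator $\bx_l^+(u)$ is sent to $\wt{b_l}(u)$, which is built from products $D(u-\frac12)\wt{b_l^{(i)}}(u)\wt{b_l^{(j)}}(u+1)$ of elements that are not themselves in $\YO^{\varepsilon}_{\ul{d}}$; so even the assertion that $\bar\varphi$ is filtered with the degrees you chose, and that $\gr\bar\varphi$ sends $\bar\bx^+_{l,r}$ to the symbol of $b_{l,r}$ (rather than to a lower-order term or to a quadratic expression in the $b^{(i)}_{l,s}$), needs a careful argument in the $I_1$ directions. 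In short: the reduction to injectivity of the associated graded map is the natural strategy, but every substantive step after that reduction is left open, so the conjecture remains unproved by your proposal, just as it is in the paper.
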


\begin{acknowledgements}
We are grateful to D.~Panyushev and D.~Timashev
for the help with references.
\end{acknowledgements}

\end{document}